\newcommand{\abs}[1]{{\left|#1\right|}}
\newcommand{\norma}[1]{{\left\Vert#1\right\Vert}}
\def\XXint#1#2#3{{\setbox0=\hbox{$#1{#2#3}{\int}$}
  \vcenter{\hbox{$#2#3$}}\kern-.5\wd0}}
\theoremstyle{definition}
\newtheorem{definizione}{Definition}[section]
\theoremstyle{plain}
\newtheorem{teorema}{Theorem}[section]
\newtheorem{lemma}[teorema]{Lemma}
\newtheorem{prop}[teorema]{Proposition}
\theoremstyle{definition}
\newtheorem{esempio}{Example}[section]
\newtheorem{oss}[esempio]{Remark}
\DeclareMathOperator{\Supp}{\text{Supp}}
\DeclareMathOperator{\R}{\mathbb{R}}
\DeclareMathOperator{\diam}{\text{diam}}
\title{On the solutions to $p$-Poisson equation with Robin boundary conditions when $p$ goes to $+\infty$}
\author{Vincenzo Amato, Alba Lia Masiello, Carlo Nitsch, Cristina Trombetti}
\date{\today}
\newcommand{\Addresses}{{
 \bigskip 
 \footnotesize 
 
 \textsc{Dipartimento di Matematica e Applicazioni ``R. Caccioppoli'', Universit\`a degli studi di Napoli Federico II, Via Cintia, Complesso Universitario Monte S. Angelo, 80126 Napoli, Italy.}\par\nopagebreak 
 
 \medskip 
 
 \textit{E-mail address}, V.~Amato: \texttt{vincenzo.amato@unina.it} 
 
  \medskip 
 
 \textit{E-mail address}, A.L.~Masiello: \texttt{albalia.masiello@unina.it} 
  
 \medskip 
 
 \textit{E-mail address}, C.~Nitsch: \texttt{c.nitsch@unina.it}

  \medskip 
 
 \textit{E-mail address}, C.~Trombetti: \texttt{cristina@unina.it} 

}}     
\begin{document}
	\maketitle
	 \vspace{-0.8cm}
	\begin{abstract}
		 We study the behaviour, when $p \to +\infty$, of the first $p$-Laplacian eigenvalues with Robin boundary conditions and the limit of the associated eigenfunctions. We prove that the limit of the eigenfunctions is a viscosity solution to an eigenvalue problem for the so-called $\infty$-Laplacian.
		
		 Moreover, in the second part of the paper, we focus our attention on the $p$-Poisson equation when the datum $f$ belongs to $L^\infty(\Omega)$ and we study the behaviour of solutions when $p\to\infty$. 
		 
		 \textsc{MSC 2020:}   35J92, 35J94, 35P15.\\
\textsc{Keywords:} $p-$Laplacian, Robin boundary conditions, eigenvalues problem, infinity Laplacian.
	\end{abstract}

 \section{Introduction}
Let $\beta$ be a positive parameter and let $\Omega$ be a bounded and open set of $\R^n$, $n\ge 2$, with Lipschitz boundary. 

We study the $\infty$-Laplacian eigenvalue problem with Robin
boundary conditions

\begin{equation}
\label{ciao2}
  \begin{cases}
  \min\Set{\abs{\nabla u} - \Lambda u, - \Delta_\infty u}=0 &\text{ in } \Omega,\\
  -\min\Set{\abs{\nabla u} - \beta u, -\displaystyle{\frac{\partial u}{\partial \nu}} }=0 &\text{ on } \partial \Omega,
\end{cases}
\end{equation}

where $\Delta_\infty$, the so-called $\infty$-Laplacian, is defined by
$$\Delta_\infty u = \left\langle D^2 u \cdot \nabla u, \nabla u \right\rangle.$$


We refer to problem \eqref{ciao2} as the $\infty$-Laplacian eigenvalue problem because it can be seen as limit, in some sense, of the $p$-Laplacian eigenvalues problem
\begin{equation}
\label{autoval}
\begin{cases}
-\Delta_p u= \Lambda_p \abs{u}^{p-2} u & \text{ in } \Omega \\
\abs{\nabla u}^{p-2} \displaystyle{\frac{\partial u}{\partial \nu}} + \beta^p \abs{u}^{p-2}u =0 & \text{ on } \partial \Omega.
\end{cases}
\end{equation}

A function $u_p\in W^{1,p}(\Omega)$ is a weak solution to \eqref{autoval} if it satisfies

\begin{equation*}
  \int _\Omega \abs{\nabla u_p}^{p-2} \nabla u_p \nabla \varphi \, dx+ \beta^p \int_{\partial\Omega}\abs{u_p}^{p-2}u_p\varphi \, d \mathcal{H}^{n-1} = \Lambda_p \int_{\Omega} \abs{u_p}^{p-2} u_p \varphi \, dx, \quad \forall \varphi \in W^{1,p}(\Omega),
\end{equation*}
where $\beta$ and $\Lambda_p$ are both positive.

It is well known that the first eigenvalue of the $p$-Laplacian is the minimum of the following Rayleigh quotient
\begin{equation}
  \label{Rel}
  \Lambda_p =\inf_{\substack{w\in W^{1,p}(\Omega) \\ \norma{w}_{L^p(\Omega)}=1}} \left\lbrace \int_\Omega \abs{\nabla w}^p \, dx +\beta^p \int_{\partial\Omega} \abs{w}^p \, d\mathcal{H}^{n-1} \right\rbrace.
\end{equation}

By classical arguments, one can show that the infimum in \eqref{Rel} is achieved and in what follows we will denote by $u_p\in W^{1,p}(\Omega)$ the eigenfunction corresponding to the first eigenvalue $\Lambda_p$.

In this paper, we firstly prove that
\begin{equation}
 \lim_{p\rightarrow+\infty} (\Lambda_p)^{1/p} =\label{infi}
  \Lambda_\infty=: \inf_{\substack{w\in W^{1,\infty}(\Omega) \\ \norma{w}_{L^\infty(\Omega)}=1}}
  \max \left\lbrace \norma{\nabla w}_{L^\infty(\Omega)}, \beta\norma{w}_{L^\infty(\partial\Omega)} \right\rbrace,
\end{equation}
and we give a geometric characterization of this quantity, precisely:

\begin{equation}\label{lambinf}
   \Lambda_\infty = \frac{1}{1/\beta +R_\Omega},
\end{equation}
where $R_\Omega$ denotes the inradius of $\Omega$, i.e. the radius of the l argest ball contained in $\Omega$. Thereafter, we prove that $\Lambda_\infty$ is the first eigenvalue of the infinite Laplacian, in the sense that equation \eqref{ciao2} admits non-trivial solutions only if $\Lambda \geq \Lambda_\infty$.

Similar results, in the case of Dirichlet and Neumann boundary conditions were obtained in \cite{JLM,JL,BDM,EKNT, RS2}.

More specifically, in \cite{JLM,JL}, Juutinen, Lindqvist and Manfredi have studied the Dirichlet case as $p \rightarrow +\infty$. They provided a complete characterization of the limiting solutions in terms of geometric quantities. 
Indeed, the first eigenvalue of the $p$-Laplace operator $\lbrace\lambda_p^D\rbrace$ happens to satisfy
$$
\lim_{p\to \infty} \left(\lambda_p^D\right)^{{1}/{p}} = \lambda^D_\infty:= \frac{1}{R_\Omega}.
$$
The related eigenfunctions $v^D_p$ also converge (up to a subsequence) to some Lipschitz function $v^D_\infty$.
Most important, the Authors show that there exists a natural viscosity formulation of the eigenvalue problem for the $\infty$-Laplacian, for which $\lambda^D_\infty$ and $v^D_\infty$ turn out to be the first eigenvalue and first eigenfunction, respectively. 

The Neumann case seems to be more subtle. It was investigated in \cite{EKNT, RS2} and similarly to the Dirichlet case, the Authors established that the first non-trivial eigenvalues of the $p$-Laplacian $\lbrace\lambda^N_p \rbrace$ satisfy
$$\lim_{p\to \infty}\left(\lambda^N_p\right)^{1/p} = \lambda_\infty^N := \frac{2}{\diam(\Omega)},$$
where $\diam(\Omega)$ is the intrinsic diameter of $\Omega$, i.e. the supremum of the geodetic distance between two points of $\Omega$.

However, while both first eigenvalues and first eigenfunctions converge (as $p\to\infty$) and are solutions to some appropriate eigenvalue problem for the $\infty$-Laplacian, in \cite{EKNT}, the Authors are able to prove that they actually converge to the \emph{first} eigenvalue e \emph{first} eigenfunction only if the domain $\Omega$ is convex. Whether or not the same holds true in the general case, it is still an open problem.

In the second part of the paper, we focus our attention on the study of the limit of the $p$-Poisson equation with Robin boundary conditions:

\begin{equation}
\label{p_originale2}
\begin{cases}
-\Delta_p v= f & \text{ in } \Omega \\
\abs{\nabla v}^{p-2} \displaystyle{\frac{\partial v}{\partial \nu}} + \beta^p \abs{v}^{p-2}v =0 & \text{ on } \partial \Omega,
\end{cases}
\end{equation}
when $f \in L^\infty(\Omega)$ is a non-negative function.

We prove that there exists (up to a subsequence) a limiting solution $v_\infty$ as $p\to\infty$
and we establish conditions on $f$ which are equivalent to the uniqueness of $v_\infty$.

The $\infty$-Poisson problem for Dirichlet boundary conditions
was already studied in \cite{BDM} by Bhattacharya, DiBenedetto and Manfredi, while, to the best of our knowledge, similar results have not been addressed in the case of Neumann boundary conditions.

 \section{Notations and Preliminaries}
\label{section_notion}
Throughout this article, $|\cdot|$ will denote the Euclidean norm in $\mathbb{R}^n$, and $\mathcal{H}^k(\cdot)$, for $k\in [0,n)$, will denote the $k-$dimensional Hausdorff measure in $\mathbb{R}^n$.

We denote by $d(x,\partial \Omega)$ the distance function from the boundary, defined as
\begin{equation}
  d(x,\partial \Omega)= \inf_{y \in \partial\Omega} \abs{x-y},
\end{equation}
for an exhaustive discussion about this function and its properties see \cite{GT}. Moreover, we recall that the inradius $R_\Omega$ of $\Omega$ is 
\begin{equation}\label{inradiuss}
  R_\Omega=\sup_{x\in\Omega}\inf_{y\in\partial\Omega}|x-y|= \norma{d(\cdot,\partial \Omega)}_{L^\infty(\Omega)}.
  \end{equation}

The following lemma makes us understand why \eqref{infi} can be seen as a limit problem of \eqref{Rel}.

\begin{lemma}
\label{lemma1}
Given $f, g \in W^{1,\infty}(\Omega)$, then 
$$
\lim_{p \to \infty} \left( \int_\Omega \abs{f}^p + \int_\Omega \abs{g}^p\right)^{1/p} = \max\left\lbrace \norma{f}_\infty , \norma{g}_\infty \right\rbrace.
$$
\end{lemma}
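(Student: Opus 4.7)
The plan is to reduce the statement to the standard fact that, on a bounded domain, the $L^p$ norm converges to the $L^\infty$ norm as $p\to\infty$. Since $\Omega$ is bounded and $f,g\in W^{1,\infty}(\Omega)\subset L^\infty(\Omega)\cap L^p(\Omega)$ for every $p$, we have
\[
\lim_{p\to\infty}\|f\|_{L^p(\Omega)}=\|f\|_{L^\infty(\Omega)},\qquad \lim_{p\to\infty}\|g\|_{L^p(\Omega)}=\|g\|_{L^\infty(\Omega)}.
\]
This is a classical elementary fact (combine the trivial lower bound coming from restricting the integral to the set $\{|f|>\|f\|_\infty-\varepsilon\}$, which has positive measure for small $\varepsilon$, with the upper bound $\|f\|_{L^p}\le|\Omega|^{1/p}\|f\|_\infty$).

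Set $A_p:=\int_\Omega|f|^p\,dx$ and $B_p:=\int_\Omega|g|^p\,dx$. The key step is the elementary two-sided estimate
\[
\max\bigl(A_p,B_p\bigr)\ \le\ A_p+B_p\ \le\ 2\max\bigl(A_p,B_p\bigr),
\]
which, after taking the $p$-th root, yields
\[
\max\bigl(\|f\|_{L^p(\Omega)},\|g\|_{L^p(\Omega)}\bigr)\ \le\ \bigl(A_p+B_p\bigr)^{1/p}\ \le\ 2^{1/p}\max\bigl(\|f\|_{L^p(\Omega)},\|g\|_{L^p(\Omega)}\bigr).
\]
Since $2^{1/p}\to 1$ and the maximum of two convergent sequences converges to the maximum of the limits, passing to the limit $p\to\infty$ in both sides and using the convergence of the $L^p$ norms recalled above gives the desired identity.

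There is essentially no obstacle: the only technical point is the convergence $\|h\|_{L^p(\Omega)}\to\|h\|_{L^\infty(\Omega)}$, which relies on $|\Omega|<\infty$ (granted by hypothesis). The $W^{1,\infty}$ regularity of $f$ and $g$ is not actually used; $L^\infty(\Omega)$ would suffice. I would write the proof in two short displays, one for the sandwich estimate and one for the passage to the limit, and note this observation as a brief remark.
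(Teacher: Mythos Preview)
Your argument is correct: the sandwich $\max(A_p,B_p)\le A_p+B_p\le 2\max(A_p,B_p)$ together with $\|h\|_{L^p(\Omega)}\to\|h\|_{L^\infty(\Omega)}$ on a set of finite measure is exactly the standard route, and your remark that only $L^\infty$ (not $W^{1,\infty}$) is needed is accurate. The paper does not supply its own proof at all---it simply cites \cite{RS}---so there is nothing to compare against beyond noting that your proof is the expected elementary one.
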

\begin{proof} 

The proof of this lemma can be found in \cite{RS}.
\end{proof}

\subsection{Viscosity solutions}
Before going on, we recall  the definition of viscosity solutions to a boundary value problem, see \cite{CIL} for more details.
\begin{definizione}
  We consider the following boundary value problem 
    \begin{equation}
    \label{1}
        \begin{cases}
            F(x, u, \nabla u, D^2u) =0 &\text{ in }\Omega, \\
            B(x,u,\nabla u)=0 &\text{ on }\partial \Omega,
        \end{cases}
    \end{equation}
    where $F:\R^n\times\R\times\R^n\times\R^{n\times n}\to \R$ and $B:\R^n\times \R\times \R^n\to\R$ are two continuous functions.
    \begin{description}
        \item[Viscosity supersolution] A lower semi-continuous function $u$ is a viscosity supersolution to \eqref{1} if, whenever we fix $x_0\in \overline{\Omega}$, for every $\phi \in C^2(\overline{\Omega})$ such that $u(x_0)=\phi (x_0)$ and $x_0$ is a strict minimum in $\Omega$ for $u - \phi$, then
        \begin{itemize}
            \item if $x_0 \in \Omega$, the following holds
            $$
            F\left(x_0,\phi(x_0), \nabla\phi(x_0) ,D^2\phi(x_0)\right) \geq 0
            $$
            \item if $x_0 \in \partial \Omega$, the following holds
            $$
            \max\Set{F\left(x_0,\phi(x_0),\nabla\phi(x_0) ,D^2\phi(x_0)\right), B\left(x_0, \phi(x_0), \nabla\phi(x_0)\right)}\geq 0
            $$
        \end{itemize}
        \item[Viscosity subsolution] An upper semi-continuous function $u$ is a viscosity subsolution to \eqref{1} if,  whenever we fix $x_0\in \overline{\Omega}$, for every $\phi \in C^2(\overline{\Omega})$ such that $u(x_0)=\phi (x_0)$ and  $x_0$ is a strict maximum in $\Omega$ for $u - \phi$, then
        \begin{itemize}
            \item if $x_0 \in \Omega$, the following holds
            $$
            F\left(x_0,\phi(x_0), \nabla\phi(x_0) ,D^2\phi(x_0)\right) \leq 0
            $$
            \item if $x_0 \in \partial \Omega$, the following holds
            $$
            \min\Set{F\left(x_0,\phi(x_0), \nabla\phi(x_0) ,D^2\phi(x_0)\right), B\left(x_0, \phi(x_0), \nabla\phi(x_0)\right)}\leq 0
            $$
        \end{itemize}
         \item[Viscosity solution] A continuous function $u$ is a viscosity solution to \eqref{1} if it is both a super and subsolution.
    \end{description}
\end{definizione}
\begin{oss}
The condition $u-\phi$ has a strict maximum or minimum can be relaxed: it is sufficient to ask that $u-\phi$ has a local maximum or minimum in a ball $B_R(x_0)$ for some positive $R$. 
\end{oss}

\section{The $\infty$-eigenvalue problem}
\label{eigenva}
Let us start this section observing that Lemma \ref{lemma1} brings to the following estimate
\begin{equation}
\label{lpminlinf}
    \limsup_{p\to \infty} \Lambda_p^{1/p} \leq\Lambda_\infty.
\end{equation}

We can say something more

\begin{lemma}
\label{th1}
Let $\Set{\Lambda_p}_{p>1}$ be the sequence of the first eigenvalues of the $p$-Laplacian operator with Robin boundary condition. Then,
\begin{equation}
    \lim_{p\to \infty} \left(\Lambda_p\right)^{\frac{1}{p}}=\Lambda_\infty,
\end{equation}
where $\Lambda_\infty$ is defined in \eqref{infi}.

Moreover, if $\Set{u_p}_{p>1}$ is the sequence of eigenfunctions associated to $\{\Lambda_p\}_{p>1}$, then there exists a function $u_\infty\in W^{1,\infty}(\Omega)$ such that, up to a subsequence,
\begin{align*}
   & u_p \to u_{\infty} & \text{ uniformly in } \, \Omega \\
   & \nabla u_p\to \nabla u_\infty & \text{ weakly in } \, L^q(\Omega), \forall q.
   \end{align*}
\end{lemma}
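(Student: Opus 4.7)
The plan is to combine the upper bound \eqref{lpminlinf} with a matching lower bound obtained by extracting a limit function $u_\infty$ from the normalized eigenfunctions and testing the variational problem \eqref{infi} against it.

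First I would fix the normalization $\norma{u_p}_{L^p(\Omega)}=1$, so that by \eqref{Rel} we have the two uniform estimates
\begin{equation*}
\int_\Omega \abs{\nabla u_p}^p\,dx \le \Lambda_p,\qquad \beta^p\int_{\partial\Omega}\abs{u_p}^p\,d\mathcal{H}^{n-1}\le \Lambda_p.
\end{equation*}
Together with \eqref{lpminlinf}, Hölder's inequality gives, for every fixed $q<p$,
\begin{equation*}
\norma{\nabla u_p}_{L^q(\Omega)}\le \abs{\Omega}^{1/q-1/p}\Lambda_p^{1/p},\qquad \norma{u_p}_{L^q(\Omega)}\le\abs{\Omega}^{1/q-1/p},
\end{equation*}
so $\{u_p\}$ is bounded in $W^{1,q}(\Omega)$ uniformly in $p\geq q$. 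A standard diagonal extraction then yields a subsequence and a function $u_\infty$ such that $\nabla u_p\rightharpoonup \nabla u_\infty$ weakly in $L^q(\Omega)$ for every $q<\infty$. Choosing $q>n$ and using the compact Rellich embedding $W^{1,q}(\Omega)\hookrightarrow C(\overline{\Omega})$, I also get $u_p\to u_\infty$ uniformly in $\overline{\Omega}$, hence on $\partial\Omega$.

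The next step is to identify $u_\infty$ as an admissible competitor in \eqref{infi}. Passing to the limit $p\to\infty$ in the Hölder estimates above yields $\norma{u_\infty}_{L^q(\Omega)}\le \abs{\Omega}^{1/q}$, and letting $q\to\infty$ gives $\norma{u_\infty}_{L^\infty(\Omega)}\le 1$. For the reverse inequality I would use $1=\norma{u_p}_{L^p(\Omega)}^p\le \abs{\Omega}\,\norma{u_p}_{L^\infty(\Omega)}^p$, hence $\norma{u_p}_{L^\infty(\Omega)}\ge\abs{\Omega}^{-1/p}\to 1$; combined with uniform convergence this forces $\norma{u_\infty}_{L^\infty(\Omega)}=1$. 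So $u_\infty\in W^{1,\infty}(\Omega)$ is admissible for the infimum defining $\Lambda_\infty$.

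To close the argument I would bound each of the two quantities appearing in \eqref{infi} by $\liminf_p \Lambda_p^{1/p}$. For the gradient term, weak lower semicontinuity of the $L^q$ norm and the Hölder bound yield
\begin{equation*}
\norma{\nabla u_\infty}_{L^q(\Omega)}\le \liminf_{p\to\infty}\norma{\nabla u_p}_{L^q(\Omega)}\le \abs{\Omega}^{1/q}\liminf_{p\to\infty}\Lambda_p^{1/p},
\end{equation*}
and letting $q\to\infty$ gives $\norma{\nabla u_\infty}_{L^\infty(\Omega)}\le\liminf_p\Lambda_p^{1/p}$. For the boundary term, the uniform convergence $u_p\to u_\infty$ on $\partial\Omega$ and the analogous Hölder estimate $\beta\norma{u_p}_{L^q(\partial\Omega)}\le\abs{\partial\Omega}^{1/q-1/p}\Lambda_p^{1/p}$ produce, after letting $p\to\infty$ and then $q\to\infty$, the bound $\beta\norma{u_\infty}_{L^\infty(\partial\Omega)}\le\liminf_p\Lambda_p^{1/p}$. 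Taking the maximum and using that $u_\infty$ is a competitor yields $\Lambda_\infty\le\liminf_p\Lambda_p^{1/p}$, which combined with \eqref{lpminlinf} closes the proof.

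The convergence statements for $u_p$ are obtained for free along the way (uniform convergence from Rellich in $W^{1,q}$ with $q>n$, weak convergence of gradients in every $L^q$ from the diagonal extraction). The main technical point I expect to have to be careful about is the nontriviality of the limit, i.e.\ the step $\norma{u_\infty}_{L^\infty(\Omega)}=1$: without it the variational argument would be vacuous. This is essentially a consequence of the $L^p$--$L^\infty$ interpolation combined with uniform convergence, and is an instance of the kind of limit computation already recorded in Lemma \ref{lemma1}.
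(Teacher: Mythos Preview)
Your proposal is correct and follows essentially the same route as the paper: the upper bound \eqref{lpminlinf}, uniform $W^{1,q}$ bounds via H\"older and a diagonal extraction to produce $u_\infty$, then the lower bound by passing to the limit in the $L^q$ estimates for $\nabla u_p$ and for $\beta u_p$ on $\partial\Omega$ and letting $q\to\infty$. Your treatment of the nontriviality $\norma{u_\infty}_{L^\infty(\Omega)}=1$ is slightly more explicit than the paper's (which records it as the implication $u_{p_j}\to u_\infty$ uniformly $\Rightarrow \norma{u_{p_j}}_{L^{p_j}}\to\norma{u_\infty}_{L^\infty}$), but the argument is the same.
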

\begin{proof}
As a consequence of \eqref{lpminlinf}, the sequence $\Set{u_p}_{p>1} $ of eigenfunctions associated to $\Lambda_p$ is uniformly bounded in $W^{1,q}(\Omega)$: indeed, if $q <p$, by H\"older inequality,
\begin{gather}
    \label{unifbound} \norma{\nabla u_p}_{L^q(\Omega)} \leq \norma{\nabla u_p}_{L^p(\Omega)} \abs{\Omega}^{\frac{1}{q} - \frac{1}{p}} \leq\Lambda_p^{1/p} \abs{\Omega}^{\frac{1}{q} - \frac{1}{p}} \leq  C,\\
    \label{unifbound2} \norma{ u_p}_{L^q(\Omega)} \leq \norma{ u_p}_{L^p(\Omega)} \abs{\Omega}^{\frac{1}{q} - \frac{1}{p}} \leq \abs{\Omega}^{\frac{1}{q} - \frac{1}{p}} \leq  C,
\end{gather}
where the constant $C$ is independent of $p$.

By a classical argument of diagonalization, see for instance \cite{BDM}, we can extract a subsequence $u_{p_j}$ such that
\begin{gather*}
     u_{p_j} \to u_\infty \,  \text{ uniformly}  \implies \Vert u_{p_j}\Vert_{L^{p_j}} \to \norma{u_{\infty}}_{L^{\infty}}, \\
    \nabla u_{p_j} \to \nabla u_\infty \, \text{ weakly in } \, L^q(\Omega), \, \forall q >1.
\end{gather*} 

Moreover, from \eqref{unifbound} and \eqref{unifbound2}, the following inequality holds
\begin{align*}
    \frac{\norma{\nabla u_\infty}_{L^q(\Omega)}}{\norma{u_\infty}_{L^q(\Omega)}}&\le \liminf_{p \to \infty} \frac{\norma{\nabla u_p}_{L^q(\Omega)}}{\norma{u_p}_{L^q(\Omega)}}\le
     \liminf_{p \to \infty} \frac{\norma{\nabla u_p}_{L^p(\Omega)}}{\norma{u_p}_{L^q(\Omega)}}  \abs{\Omega}^{\frac{1}{q}-\frac{1}{p}}\\ &\le  \frac{\abs{\Omega}^{\frac{1}{q}}}{\norma{u_\infty}_{L^q(\Omega)}} \liminf_{p \to \infty} \left( \Lambda_p\right)^{\frac{1}{p}}.
\end{align*}
Letting $q\to\infty$ we obtain $$\norma{\nabla u_\infty}_{L^\infty(\Omega)}\le \liminf_{p \to \infty} \left( \Lambda_p\right)^{\frac{1}{p}} $$

Similarly
$$
\beta \norma{ u_p}_{L^q(\partial\Omega)} \leq \beta \norma{\ u_p}_{L^p(\partial\Omega)} \abs{\partial\Omega}^{\frac{1}{q} - \frac{1}{p}} \leq\Lambda_p^{1/p} \abs{\partial\Omega}^{\frac{1}{q} - \frac{1}{p}}\leq  C,
$$
gives us
$$
\beta \norma{u_\infty}_{L^\infty(\partial\Omega)}\le \liminf_{p \to \infty} \left( \Lambda_p\right)^{\frac{1}{p}},
$$
  hence
 $$
\Lambda_\infty \leq \liminf_{p \to \infty} \Lambda_p^{1/p}.
$$
\end{proof}

Now we want to show that the limit $u_\infty$ solves \eqref{ciao2} in viscosity sense, but before we need the following proposition

\begin{prop}
\label{teorema1.3}
A continuous weak solution $u$ to \eqref{autoval} is a viscosity solution to \eqref{autoval}.
\end{prop}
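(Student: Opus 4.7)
The plan is a proof by contradiction, in the spirit of the classical weak-to-viscosity argument for divergence-form elliptic equations. I describe the supersolution case; the subsolution property follows by a symmetric construction using $(u - \phi + \delta)_+$ and reversing inequalities. Assume, for contradiction, that $u$ fails to satisfy the viscosity supersolution condition at some $x_0 \in \overline{\Omega}$: there exists $\phi \in C^2(\overline{\Omega})$ with $\phi(x_0)=u(x_0)$ and $x_0$ a strict minimum of $u-\phi$, such that $-\Delta_p\phi(x_0) < \Lambda_p\abs{\phi(x_0)}^{p-2}\phi(x_0)$ and, when $x_0 \in \partial\Omega$, additionally $\abs{\nabla\phi(x_0)}^{p-2}\frac{\partial\phi}{\partial\nu}(x_0) < -\beta^p\abs{\phi(x_0)}^{p-2}\phi(x_0)$. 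By continuity, these strict inequalities persist with uniform gaps $\epsilon_1, \epsilon_2 > 0$ on some $B_\rho(x_0)\cap\overline{\Omega}$.

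For small $\delta > 0$, the function $\varphi := (\phi+\delta-u)_+$ belongs to $W^{1,p}(\Omega)$; the strict minimum property guarantees that $\mathrm{supp}\,\varphi \subset B_\rho(x_0)\cap\overline{\Omega}$ and $\varphi\equiv 0$ on $\partial B_\rho\cap\Omega$, while on $\mathrm{supp}\,\varphi$ one has $\phi\leq u$ and $\nabla\varphi=\nabla\phi-\nabla u$. Multiplying the interior inequality $-\Delta_p\phi \leq \Lambda_p\abs{\phi}^{p-2}\phi - \epsilon_1$ by $\varphi$, integrating by parts (in the boundary case using the strict Robin inequality $\abs{\nabla\phi}^{p-2}\frac{\partial\phi}{\partial\nu} \leq -\beta^p\abs{\phi}^{p-2}\phi - \epsilon_2$ to handle the surface flux), and subtracting the weak formulation for $u$ tested against the same $\varphi$, one arrives after rearrangement at
\[
G + \Lambda_p\,K_\Omega + \epsilon_1\int_{\Omega}\varphi\,dx + \epsilon_2\int_{\partial\Omega}\varphi\,d\mathcal{H}^{n-1} \leq \beta^p\,K_{\partial\Omega},
\]
where $G := \int_{\Omega}\bigl(\abs{\nabla\phi}^{p-2}\nabla\phi - \abs{\nabla u}^{p-2}\nabla u\bigr)\cdot(\nabla\phi-\nabla u)\,dx \geq 0$ by the monotonicity of $\xi\mapsto\abs{\xi}^{p-2}\xi$, and $K_\Omega, K_{\partial\Omega} \geq 0$ are the moments $\int(\abs{u}^{p-2}u - \abs{\phi}^{p-2}\phi)\varphi$ over $\Omega$ and $\partial\Omega$, respectively (non-negative because $\phi\leq u$ on the support and $t\mapsto\abs{t}^{p-2}t$ is monotone).

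When $x_0 \in \Omega$, $\varphi$ vanishes on $\partial\Omega$ so that $K_{\partial\Omega}=0$, and the estimate collapses to $\epsilon_1\int_{\Omega}\varphi \leq 0$, contradicting $\varphi(x_0)=\delta>0$. When $x_0 \in \partial\Omega$, the uniform continuity of $t\mapsto\abs{t}^{p-2}t$ on the compact range of $u$ and $\phi$ near $x_0$, together with $|u-\phi|\leq\delta$ on $\mathrm{supp}\,\varphi$, produces a modulus $\omega(\delta)\to 0$ such that $\abs{u}^{p-2}u - \abs{\phi}^{p-2}\phi \leq \omega(\delta)$ on $\mathrm{supp}\,\varphi$; hence $K_{\partial\Omega}\leq\omega(\delta)\int_{\partial\Omega}\varphi$. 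Since $\varphi(x_0)=\delta>0$ on $\partial\Omega$ forces $\int_{\partial\Omega}\varphi>0$, the displayed inequality reduces to $\epsilon_2\leq\beta^p\omega(\delta)$, which fails for sufficiently small $\delta$. The main technical obstacle is precisely this boundary case: the flux arising from the integration by parts has the ``wrong sign'' for a direct comparison, and only the combined use of the strict Robin gap $\epsilon_2$, the smallness of $\delta$, and the uniform continuity of $\abs{\cdot}^{p-2}(\cdot)$ allows one to absorb the unwanted surface contribution and close the argument.
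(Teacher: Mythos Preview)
Your argument is correct and follows the same overall scheme as the paper: assume the viscosity inequality fails strictly at $x_0$, localize to a ball where the strict inequalities persist, test the weak formulation against $(\phi+\text{shift}-u)_+$, and derive a contradiction from the monotonicity of $\xi\mapsto|\xi|^{p-2}\xi$. The one genuine tactical difference lies in the boundary case. The paper shifts the comparison function to $\psi=\phi+m/2$ (with $m=\inf_{\partial B_r\cap\overline\Omega}(u-\phi)$, left implicit there) and verifies the strict Robin inequality for $\psi$ itself; then on the set $\{\psi>u\}$ the boundary integrand $(|\psi|^{p-2}\psi-|u|^{p-2}u)(\psi-u)$ is nonnegative by monotonicity, so the contradiction is immediate---at the cost of checking (implicitly, via smallness of $r$ and hence of $m$) that the Robin inequality survives the shift. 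You instead keep the Robin inequality for $\phi$, accept the resulting boundary term $K_{\partial\Omega}$ with the wrong sign, and absorb it through the modulus of continuity $\omega(\delta)$ of $t\mapsto|t|^{p-2}t$ together with $|u-\phi|<\delta$ on $\mathrm{supp}\,\varphi$. Both devices encode the same smallness; yours is more explicit about where the difficulty sits, while the paper's shift makes the final inequality chain cleaner once the right $\psi$ is in hand.
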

\begin{proof}
The proof is similar to the one in \cite{JLM, EKNT} for the $p$-Laplacian with other boundary conditions. 

We only write explicitly the proof that $u_\infty$ satisfies the boundary conditions in the viscosity sense.

Let $u$  be a continuous weak solution to \eqref{autoval}, let $x_0 \in \partial \Omega$ and let us consider a function $\phi $ such that $\phi(x_0)=u(x_0)$ and such that $u-\phi$ has a strict minimum at $x_0$.
Then \begin{equation}
\begin{multlined}
     \max \Big\{-\abs{\nabla \phi(x_0)}^{p-2} \Delta \phi(x_0)- (p-2) \abs{\nabla \phi(x_0)}^{p-4}\Delta_\infty \phi (x_0)- \Lambda_p \abs{\phi(x_0)}^{p-2}\phi(x_0),\\ \abs{\nabla \phi(x_0)}^{p-2} \frac{\partial \phi(x_0)}{ \partial \nu} + \beta^p \abs{\phi(x_0)}^{p-2} \phi(x_0) \Big\} \geq0.
\end{multlined}
\end{equation}
Assume by contradiction that both terms are negative.
If we choose $r$ sufficiently small, in $\overline{\Omega}\cap B_r(x_0 )$, we have
$$
     -\abs{\nabla \phi(x)}^{p-2} \Delta \phi(x)- (p-2) \abs{\nabla \phi(x)}^{p-4}\Delta_\infty \phi (x)- \Lambda_p \abs{\phi(x)}^{p-2}\phi(x) < 0
     $$
and, in $\partial\Omega \cap B_r(x_0 )$,
$$
\abs{\nabla \psi(x)}^{p-2} \frac{\partial \psi(x)}{ \partial \nu} + \beta^p \abs{\psi(x)}^{p-2} \psi(x) <0, \quad\text{where } \psi = \phi + \frac{m}{2}.
$$

Then
\begin{equation*}
    \begin{multlined}
           \int_{\Set{\psi > u} \cap B_r(x_0 )} \abs{\nabla\psi}^{p-2} \nabla \psi \nabla (\psi - u)\, dx \\<\Lambda_p \int_{\Set{\psi > u} \cap B_r(x_0 )} \abs{\phi}^{p-2} \phi (\psi - u) \, dx - \beta^p \int_{\partial \Omega \cap B_r(x_0 ) \cap  \Set{\psi >u} } \abs{\psi}^{p-2} \psi (\psi - u) \, d\mathcal{H}^{n-1},
    \end{multlined}
\end{equation*}
using the definition of weak solution, we have
\begin{equation*}
\begin{aligned}
  C(N,p) \int_{\Set{\psi > u} \cap B_r(x_0 )} &\abs{\nabla \psi - \nabla u}^p \, dx \\ &\leq \int_{\Set{\psi > u} \cap B_r(x_0 )} \left\langle \abs{\nabla\psi}^{p-2} \nabla \psi - \abs{\nabla u}^{p-2} \nabla u, \nabla(\psi - u)\right\rangle \, dx\\  
  &< \Lambda_p \int_{\Set{\psi > u} \cap B_r(x_0 )} \left(\abs{\phi}^{p-2} \phi-\abs{u}^{p-2} u\right) (\psi - u) \, dx \\  &- \beta^p \int_{\partial \Omega \cap B_r(x_0 ) \cap  \Set{\psi >u}} \left(\abs{\psi}^{p-2}\psi- \abs{u}^{p-2} u\right) (\psi - u)\, d\mathcal{H}^{n-1} <0
\end{aligned}
\end{equation*}
which gives a contradiction.
\end{proof}

Now we can prove the following
\begin{teorema}
Let $u_\infty$ be  the function given in Theorem \ref{th1}. Then $u_\infty$ is a viscosity solution to
\begin{equation}
    \label{ciao}
    \begin{cases}
    \min\Set{\abs{\nabla u} - \Lambda_\infty u, - \Delta_\infty u}=0 &\text{ in } \Omega,\\
    -\min\Set{\abs{\nabla u} - \beta u, -\displaystyle{\frac{\partial u}{\partial \nu}} }=0 &\text{ on } \partial \Omega.
\end{cases}
\end{equation}

\end{teorema}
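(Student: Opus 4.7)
The plan is to exploit Proposition \ref{teorema1.3}, which guarantees that each eigenfunction $u_p$ is a viscosity solution of \eqref{autoval}, and to pass to the limit using the uniform convergence $u_p \to u_\infty$ from Lemma \ref{th1}. Given $\phi \in C^2(\overline{\Omega})$ touching $u_\infty$ at a strict local extremum $x_0 \in \overline{\Omega}$, uniform convergence produces $x_p \to x_0$ at which $u_p - \phi$ has a local extremum of the same type, so the viscosity inequalities for $u_p$ at $x_p$ become available. Throughout, I assume $\phi(x_0) > 0$ and $\nabla\phi(x_0) \neq 0$; the degenerate cases are handled separately by the standard perturbation trick (adding a small quadratic).

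For the interior supersolution, at an interior local minimum the viscosity inequality for $u_p$ reads
\begin{equation*}
-|\nabla\phi(x_p)|^{p-2}\Delta\phi(x_p) - (p-2)|\nabla\phi(x_p)|^{p-4}\Delta_\infty\phi(x_p) \geq \Lambda_p\, \phi(x_p)^{p-1}.
\end{equation*}
Dividing by $(p-2)|\nabla\phi(x_p)|^{p-4}$ and letting $p \to \infty$, the left-hand side tends to $-\Delta_\infty\phi(x_0)$, while a short logarithmic computation (using $\Lambda_p^{1/p} \to \Lambda_\infty$) shows that the right-hand side tends to $0$ if $|\nabla\phi(x_0)| > \Lambda_\infty\phi(x_0)$ and diverges to $+\infty$ if $|\nabla\phi(x_0)| < \Lambda_\infty\phi(x_0)$. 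The latter case would contradict the finiteness of the left-hand side; hence necessarily $|\nabla\phi(x_0)| \geq \Lambda_\infty\phi(x_0)$ \emph{and} $-\Delta_\infty\phi(x_0) \geq 0$, i.e.\ $\min\{|\nabla\phi| - \Lambda_\infty\phi,\, -\Delta_\infty\phi\}(x_0) \geq 0$. The subsolution at an interior maximum follows by reversing all inequalities: if $|\nabla\phi(x_0)| > \Lambda_\infty\phi(x_0)$ then $-\Delta_\infty\phi(x_0) \leq 0$, and otherwise $|\nabla\phi(x_0)| - \Lambda_\infty\phi(x_0) \leq 0$ directly, so in either case the $\min$ is $\leq 0$.

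At a boundary point $x_0 \in \partial\Omega$, the $p$-viscosity supersolution condition gives
\begin{equation*}
\max\bigl\{-|\nabla\phi|^{p-2}\Delta\phi - (p-2)|\nabla\phi|^{p-4}\Delta_\infty\phi - \Lambda_p\phi^{p-1},\ |\nabla\phi|^{p-2}\partial_\nu\phi + \beta^p\phi^{p-1}\bigr\}(x_p) \geq 0,
\end{equation*}
so, up to a subsequence, one of the two arguments is non-negative for all large $p$. If the first, the interior argument above yields $\min\{|\nabla\phi| - \Lambda_\infty\phi,\, -\Delta_\infty\phi\}(x_0) \geq 0$, and the required $\max$ in the viscosity formulation of \eqref{ciao} is then non-negative. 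If the second, either $\partial_\nu\phi(x_0) \geq 0$ (and the Robin condition is immediate), or rewriting the Robin inequality as $\beta^p\phi(x_p)^{p-1} \geq |\nabla\phi(x_p)|^{p-2}(-\partial_\nu\phi(x_p))$ and extracting the $(p-1)$-th root gives $\beta\phi(x_0) \geq |\nabla\phi(x_0)|$ in the limit, equivalently $-\min\{|\nabla\phi|-\beta\phi,\, -\partial_\nu\phi\}(x_0) \geq 0$. The boundary subsolution at a maximum is handled analogously (with the boundary condition now encoded as a $\min \leq 0$).

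The main technical obstacle is the careful bookkeeping at the boundary, where the $p$-viscosity condition couples two terms of different orders in $p$, and the treatment of the degenerate cases $\nabla\phi(x_0) = 0$ or $\phi(x_0) = 0$: in these situations the algebraic manipulations above break down and one must replace $\phi$ by a suitable small perturbation before passing to the limit, exactly as in the Juutinen--Lindqvist--Manfredi framework adapted to Robin conditions.
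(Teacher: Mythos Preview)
Your proposal is correct and follows essentially the same route as the paper: invoke Proposition~\ref{teorema1.3} so that each $u_p$ is a viscosity solution of \eqref{autoval}, use uniform convergence to produce extrema $x_p \to x_0$, divide the $p$-level inequality by $(p-2)\abs{\nabla\phi(x_p)}^{p-4}$, and read off both $\abs{\nabla\phi(x_0)} \geq \Lambda_\infty\phi(x_0)$ (from the blow-up of the right-hand side) and $-\Delta_\infty\phi(x_0)\geq 0$ (from the limit of the left-hand side); the boundary analysis via a subsequence dichotomy and the case split on the sign of $\partial_\nu\phi(x_0)$ is exactly what the paper does. The only cosmetic difference is that the paper shifts $\phi$ by $c_p = u_p(x_p)-\phi(x_p)$ so that $\phi_p(x_p)=u_p(x_p)$, whereas you work directly with $\phi$; since $c_p\to 0$ this changes nothing in the limit.
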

\begin{proof}We divide the proof in two steps.

\textbf{Step 1} $u_\infty$ is a viscosity supersolution.\\ Let $x_0\in \Omega$  and let $\phi\in C^2(\Omega)$ be such that $u_\infty-\phi$ has a strict minimum in $x_0$. We want to show
$$
\min\Set{\abs{\nabla \phi(x_0)} - \Lambda_\infty \phi(x_0), - \Delta_\infty \phi(x_0)}\geq 0 
$$ 
Notice that $u_p- \phi$ has a minimum in $x_p$ and $x_p \to x_0$. If we set $\phi_p(x) =\phi(x) + c_p$ with $c_p= u_p(x_p) - \phi(x_p)\to 0$ when $p$ goes to infinity,  we have that $u_p(x_p)= \phi_p(x_p) $ and $ u_p- \phi_p$ has a minimum in $x_p$, so Proposition \ref{teorema1.3} implies
\begin{equation}
    \label{7}
    -\abs{\nabla \phi_p(x_p)}^{p-2} \Delta \phi_p(x_p)- (p-2) \abs{\nabla \phi_p(x_p)}^{p-4}\Delta_\infty \phi (x_p)- \Lambda_p \abs{\phi_p(x_p)}^{p-2}\phi_p(x_p) \geq0.
\end{equation}
Now dividing by $(p-2) \abs{\nabla \phi_p(x_p)}^{p-4}$, we obtain
\begin{equation}
\label{f1}
   - \Delta_\infty \phi_p(x_p) - \frac{\abs{\nabla \phi_p(x_p)}^{2 } \Delta \phi_p(x_p)}{ p-2} \geq \frac{ \abs{\nabla \phi_p(x_p)}^4}{(p-2) \phi_p(x_p)}	\left(\frac{\Lambda_p^{1/p} \phi_p(x_p)}{ \abs{\nabla \phi_p(x_p)}}\right)^p
\end{equation}

  This gives us
$
\abs{\nabla \phi(x_0)}-\Lambda_\infty \phi(x_0)\geq 0 $ since, otherwise, the right-hand side of \eqref{f1} would go to infinity, in contradiction with the fact that $\phi \in C^2(\Omega)$. Moreover 
$- \Delta_\infty \phi(x_0)\geq 0
$, just taking the limit.

  Then, $\min \Set{\abs{\nabla \phi(x_0)}-\Lambda_\infty \phi(x_0), - \Delta_\infty \phi(x_0)}\ge 0$ and $u_\infty$ is a viscosity supersolution.

Let us fix  $x_0 \in \partial \Omega$, $\phi\in C^2(\overline{\Omega})$ such that  $u-\phi$ has a strict minimum in $x_0$, our aim is to prove that 
$$
\max\Set{\min\Set{\abs{\nabla \phi(x_0)} - \Lambda_\infty \phi(x_0), - \Delta_\infty \phi(x_0)}, -\min\Set{\abs{\nabla \phi(x_0)}- \beta \phi(x_0), - \frac{\partial \phi}{\partial \nu} (x_0)}} \geq 0
$$
If for infinitely many $x_p\in \Omega$ \eqref{7} holds true, then we get
$$
\min\Set{\abs{\nabla \phi(x_0)} - \Lambda_\infty \phi(x_0), - \Delta_\infty \phi(x_0)} \geq 0.
$$
If for infinitely many $p$, $x_p \in \partial \Omega$ the following holds true 
$$
\abs{\nabla \phi_p(x_p)}^{p-2} \frac{\partial \phi_p(x_p)}{ \partial \nu} + \beta^p \abs{\phi_p(x_p)}^{p-2} \phi_p(x_p)  \geq 0,
$$
then
$$
\abs{\nabla \phi_p(x_p)}^{p-2}\left( - \frac{\partial \phi_p(x_p)}{ \partial \nu}\right)\leq \beta^p \abs{\phi_p(x_p)}^{p-2} \phi_p(x_p).
$$

Only two cases can occur:
\begin{itemize}
    \item $\displaystyle{- \frac{\partial \phi}{\partial \nu}(x_0) \leq 0}$;
    \item $\displaystyle{- \frac{\partial \phi}{\partial \nu}(x_0) > 0}$, then letting $p$ to infinity in the following
   
$$
\left(\abs{\nabla \phi_p(x_p)}^{p-2}\left( - \frac{\partial \phi_p(x_p)}{ \partial \nu}\right) \right)^{1/p}\leq \left(\beta^p \abs{\phi_p(x_p)}^{p-2} \phi_p(x_p)\right)^{1/p} 
$$

we get $\displaystyle{\abs{\nabla \phi(x_0)} \leq \beta \phi (x_0)}$.
\end{itemize}
That is
$$
-\min\Set{\abs{\nabla \phi(x_0)}- \beta \phi(x_0), - \frac{\partial \phi}{\partial \nu} (x_0)} \geq 0.
$$

\textbf{Step 2}  $u_\infty$ is a viscosity subsolution.

Let us fix $x_0\in\Omega$, $\phi\in C^2(\Omega)$ such that $u_\infty-\phi$ has a strict maximum. We want to prove that $$\min\left\lbrace\abs{\nabla \phi(x_0)}-\Lambda_\infty \phi(x_0), - \Delta_\infty \phi(x_0)\right\rbrace\le 0,$$ so it is enough to prove that only one of the two terms in the bracket is non positive.

For instance, assume  that $-\Delta_\infty \phi (x_0)>0$, we can argue as in \eqref{7}, but now, all the inequality involving the second order differential operator are reversed and we get
\begin{equation*}
\Lambda_p \phi_p^{p-1}(x_p) \ge (p-2) \abs{\nabla \phi_p(x_p)}^{p-4}
\left[-\frac{\abs{\nabla \phi_p(x_p)}^2\Delta\phi_p(x_p)}{p-2}
- \Delta_\infty \phi_p(x_p)\right].
\end{equation*}
As $-\Delta_\infty \phi(x_0)>0$, the term in the big parenthesis is non-negative, we can erase everything to the power $1/p$, obtaining
$$\Lambda_\infty \phi(x_0)\ge \abs{\nabla \phi(x_0)},$$
which shows that $u_\infty$ is a viscosity subsolution to \eqref{ciao}.

Similar arguments to step 1 give us the boundary conditions for viscosity subsolution.
\end{proof}

  We are also able to give a geometric characterization of $\Lambda_\infty$. 
\begin{lemma}
\label{charaterization}
Let $\Lambda_\infty$ be the quantity defined in \eqref{infi} and let $R_\Omega$ be the inradius of $\Omega$. Then
\begin{equation*}
    \Lambda_\infty = \min_{x_0 \in \Omega} \frac{1}{\frac{1}{\beta } + d (x_0,\partial \Omega )}= \frac{1}{\frac{1}{\beta} + R_\Omega}.
\end{equation*}
\end{lemma}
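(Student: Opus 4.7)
The second equality in the statement is immediate: the function $x_0 \mapsto \frac{1}{1/\beta + d(x_0,\partial\Omega)}$ is minimized precisely where $d(x_0,\partial\Omega)$ is maximal, and by \eqref{inradiuss} this maximum equals $R_\Omega$. So the only nontrivial content is the identity $\Lambda_\infty = \frac{1}{1/\beta + R_\Omega}$, which I would prove by matching upper and lower bounds.

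For the upper bound, the plan is to exhibit an explicit admissible test function in the Rayleigh-type infimum \eqref{infi}. Since $d(\cdot,\partial\Omega)$ is $1$-Lipschitz on $\Omega$, vanishes on $\partial\Omega$, and has $L^\infty$-norm equal to $R_\Omega$, the natural candidate is
$$
w(x) = \frac{1/\beta + d(x,\partial\Omega)}{1/\beta + R_\Omega}.
$$
A short verification shows $\norma{w}_{L^\infty(\Omega)}=1$, $\norma{\nabla w}_{L^\infty(\Omega)} \le \frac{1}{1/\beta + R_\Omega}$, and $\beta \norma{w}_{L^\infty(\partial\Omega)} = \frac{1}{1/\beta + R_\Omega}$, from which $\Lambda_\infty \le \frac{1}{1/\beta+R_\Omega}$.

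For the lower bound I take an arbitrary admissible $w\in W^{1,\infty}(\Omega)$ with $\norma{w}_{L^\infty(\Omega)}=1$ and pick $x^\ast \in \overline{\Omega}$ with $|w(x^\ast)|=1$. Let $y^\ast \in \partial\Omega$ realize $d(x^\ast,\partial\Omega) = |x^\ast - y^\ast|$. The key geometric observation (and the main, though mild, obstacle, since $\Omega$ is not assumed convex) is that the closed segment $[x^\ast, y^\ast]$ lies in $\overline{B_{d(x^\ast,\partial\Omega)}(x^\ast)} \subset \overline{\Omega}$, so $w$ is Lipschitz along it with constant $\norma{\nabla w}_{L^\infty(\Omega)}$. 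Hence
$$
1 = |w(x^\ast)| \le |w(y^\ast)| + \norma{\nabla w}_{L^\infty(\Omega)}\, d(x^\ast,\partial\Omega) \le \norma{w}_{L^\infty(\partial\Omega)} + R_\Omega\, \norma{\nabla w}_{L^\infty(\Omega)}.
$$

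Setting $M:=\max\{\norma{\nabla w}_{L^\infty(\Omega)},\beta\norma{w}_{L^\infty(\partial\Omega)}\}$, the above inequality yields $1 \le M/\beta + M\,R_\Omega = M\,(1/\beta + R_\Omega)$, so $M \ge \frac{1}{1/\beta+R_\Omega}$. Taking the infimum over admissible $w$ gives $\Lambda_\infty \ge \frac{1}{1/\beta+R_\Omega}$, which together with the previous bound concludes the proof.
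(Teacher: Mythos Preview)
Your proof is correct and follows essentially the same approach as the paper: the same test function $\tfrac{1}{\beta}+d(\cdot,\partial\Omega)$ (up to normalization) for the upper bound, and the same Lipschitz estimate along the segment from an interior point to its nearest boundary point for the lower bound. Your version is in fact slightly cleaner---you bound both terms by $M=\max\{\norma{\nabla w}_\infty,\beta\norma{w}_{L^\infty(\partial\Omega)}\}$ directly instead of splitting into two cases, and you explicitly justify (via $[x^\ast,y^\ast]\subset\overline{B_{d(x^\ast,\partial\Omega)}(x^\ast)}\subset\overline{\Omega}$) that the Lipschitz bound along the segment is legitimate even without convexity, a point the paper leaves implicit.
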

\begin{proof}
The function $\frac{1}{\beta} + d(x, \partial \Omega) \in W^{1, \infty}(\Omega)$, moreover
$$
\norma{\nabla \left({1}/{\beta} + d(x, \partial \Omega)\right)}_{L^\infty(\Omega)}=1\quad \text{ and } \quad \beta \norma{ {1}/{\beta} + d(x, \partial \Omega)}_{L^\infty(\partial \Omega)}=1.
$$
Then
$$
\Lambda_\infty \leq \min_{x_0 \in \Omega} \frac{1}{\frac{1}{\beta} + d(x_0, \partial \Omega)}.
$$
In order to prove the reverse inequality, we take $w \in W^{1,\infty}(\Omega)$ such that $\norma{w}_{L^\infty(\Omega)}=1$. 

The following facts can occur
\begin{description}
\item[Case 1]If $\beta \norma{w}_{L^\infty(\partial\Omega)} \leq \norma{\nabla w}_{L^\infty(\Omega)} $, then 
$$
\max \left\lbrace \norma{\nabla w}_{L^\infty(\Omega)}, \beta\norma{w}_{L^\infty(\partial\Omega)} \right\rbrace= \norma{\nabla w}_{L^\infty(\Omega)}.
$$
  We choose $x \in \Omega$ and $y$ equal to the point on the boundary such that $\abs{x-y}=d(x,\partial\Omega)$. So, we have
\begin{equation*}
    \begin{aligned}
     \abs{w(x)}&\leq \abs{w(x) -w(y)} +\abs{w(y)} \\& \leq \norma{\nabla w}_{L^\infty(\Omega)}\abs{x-y} + \norma{w}_{L^\infty(\partial\Omega)} \\&\le \norma{\nabla w}_{L^\infty(\Omega)}d(x,\partial \Omega)+ \frac{1}{\beta } \norma{\nabla w}_{L^\infty(\Omega)}\\ &= \norma{\nabla w}_{L^\infty(\Omega)} \left( \frac{1}{\beta } +d(x,\partial \Omega) \right) \\ &\le \norma{\nabla w}_{L^\infty(\Omega)} \norma{ {1}/{\beta } +d(x,\partial \Omega)}_{L^\infty(\Omega)},
    \end{aligned}
\end{equation*}
and then
$$
\frac{\norma{\nabla w}_{L^\infty(\Omega)}}{\norma{w}_{L^\infty(\Omega)}}\geq\frac{1}{ \norma{ {1}/{\beta } +d(x,\partial \Omega)}_{L^\infty(\Omega)}}
$$
\item[Case 2]If $\beta \norma{w}_{L^\infty(\partial\Omega)} > \norma{\nabla w}_{L^\infty(\Omega)} $, then 
$$
\max \left\lbrace \norma{\nabla w}_{L^\infty(\Omega)}, \beta\norma{w}_{L^\infty(\partial\Omega)} \right\rbrace= \beta\norma{w}_{L^\infty(\partial\Omega)}.
$$
With the same choice of $x$ and $y$, we have
\begin{equation*}
    \begin{aligned}
     \abs{w(x)}&\leq \abs{w(x) -w(y)} +\abs{w(y)} \\& \leq \norma{\nabla w}_{L^\infty(\Omega)}\abs{x-y} + \norma{w}_{L^\infty(\partial\Omega)} \\&\le \beta \norma{w}_{L^\infty(\partial\Omega)}d(x,\partial \Omega)+  \norma{w}_{L^\infty(\partial\Omega)} \\ &= \beta \norma{w}_{L^\infty(\partial\Omega)}\left(d(x,\partial \Omega)+ \frac{1}{\beta } \right) \\ &\le \beta \norma{w}_{L^\infty(\partial\Omega)} \norma{ {1}/{\beta } +d(x,\partial \Omega)}_{L^\infty(\Omega)}.
    \end{aligned}
\end{equation*}
Hence,
$$
\frac{\beta \norma{w}_{L^\infty(\partial\Omega)}}{\norma{w}_{L^\infty(\Omega)}}\geq \frac{1}{ \norma{ {1}/{\beta } +d(x,\partial \Omega)}_{L^\infty(\Omega)}}.
$$
\end{description}
Finally we get $\forall w \in W^{1,\infty}(\Omega):\norma{w}_{L^\infty(\Omega)}=1,$
$$
\max \left\lbrace \norma{\nabla w}_{L^\infty(\Omega)}, \beta\norma{w}_{L^\infty(\partial\Omega)} \right\rbrace \ge  \frac{1}{ \norma{ {1}/{\beta } +d(x,\partial \Omega)}_{L^\infty(\Omega)}} $$
and then the desired inequality
$$
\Lambda_\infty\geq \min_{x_0 \in \Omega} \frac{1}{\frac{1}{\beta} + d(x_0, \partial \Omega)}.
$$

\end{proof}

\begin{teorema}
Let $\Lambda_\infty$ be the quantity defined in \eqref{infi}. Then$$\Lambda_\infty(\Omega)\ge \Lambda_\infty(\Omega^\sharp),$$
where $\Omega^\sharp$ is the ball centered at the origin with the same measure of $\Omega$.
\end{teorema}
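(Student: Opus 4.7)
The plan is to reduce the eigenvalue inequality to a purely geometric statement about inradii, using the characterization established in Lemma \ref{charaterization}. Since $\Lambda_\infty(\Omega) = \dfrac{1}{1/\beta + R_\Omega}$ and the analogous identity holds for $\Omega^\sharp$, the desired inequality $\Lambda_\infty(\Omega) \geq \Lambda_\infty(\Omega^\sharp)$ is equivalent to
$$
R_\Omega \leq R_{\Omega^\sharp},
$$
i.e.\ the inradius is maximized, among sets of fixed volume, by the ball.

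To prove this, I would first observe that the inradius of a ball coincides with its radius, so $R_{\Omega^\sharp}$ equals the radius $r$ of $\Omega^\sharp$, characterized by $\omega_n r^n = \abs{\Omega^\sharp} = \abs{\Omega}$. Next, from the very definition of the inradius there exists a point $x_0\in\Omega$ such that $B_{R_\Omega}(x_0)\subseteq \Omega$; in particular
$$
\omega_n R_\Omega^n = \abs{B_{R_\Omega}(x_0)} \leq \abs{\Omega} = \omega_n R_{\Omega^\sharp}^n,
$$
which immediately yields $R_\Omega \leq R_{\Omega^\sharp}$.

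Putting the two ingredients together, the map $R\mapsto 1/(1/\beta + R)$ is monotone decreasing, so
$$
\Lambda_\infty(\Omega) = \frac{1}{1/\beta + R_\Omega} \geq \frac{1}{1/\beta + R_{\Omega^\sharp}} = \Lambda_\infty(\Omega^\sharp),
$$
which is the claim. There is no real obstacle here: once Lemma \ref{charaterization} is available, the statement is a one-line consequence of the elementary fact that among sets of given volume the ball has the largest inradius. It is worth noting, however, that the inequality goes in the opposite direction with respect to the classical Faber–Krahn comparison for finite $p$, reflecting the fact that $\Lambda_\infty$ is really a geometric quantity driven by the inradius rather than by a symmetrization of the eigenfunction.
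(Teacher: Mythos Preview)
Your argument is correct and matches one of the two approaches the paper itself gives: the paper first derives the inequality by passing to the limit in the Faber--Krahn inequality $\Lambda_p(\Omega)\ge\Lambda_p(\Omega^\sharp)$ for finite $p$ (citing \cite{BD}), and then remarks that it also follows directly from Lemma~\ref{charaterization} together with the fact that the ball maximizes the inradius among sets of given volume---which is precisely what you do, with the added benefit that you spell out the one-line volume comparison $\omega_n R_\Omega^n\le|\Omega|=\omega_n R_{\Omega^\sharp}^n$.

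One correction, however: your closing remark is mistaken. The inequality $\Lambda_\infty(\Omega)\ge\Lambda_\infty(\Omega^\sharp)$ goes in the \emph{same} direction as the classical Faber--Krahn inequality for finite $p$ (the ball minimizes the first Robin eigenvalue), not the opposite one; indeed the paper obtains it precisely by letting $p\to\infty$ in $\Lambda_p(\Omega)\ge\Lambda_p(\Omega^\sharp)$. You should drop or amend that sentence.
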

\begin{proof}
The Faber-Krahn inequality for the first eigenvalue of the $p$-Laplacian with Robin boundary condition (for instance see \cite{BD}) states that
$$\Lambda_p(\Omega)\ge \Lambda_p(\Omega^\sharp).$$

Letting $p$ go to infinity, we have
$$\Lambda_\infty(\Omega)\ge \Lambda_\infty(\Omega^\sharp).$$

This can follow also from the geometric characterization in Lemma \ref{charaterization}
$$\Lambda_\infty= \frac{1}{\frac{1}{\beta}+ R_\Omega}$$
as the ball maximizes the inradius among sets of given volume.  
\end{proof}
\begin{oss}
One can easily prove that the function $\frac{1}{\beta}+d(x,\partial\Omega)$ is an eigenfunction  if the domain $\Omega= B_R(x_0)$. This is not true if $\Omega$ is a square: see for instance \cite{JLM}.
\end{oss}

\subsection{The first Robin $\infty$-eigenvalue}

Now we want to show that $\Lambda_\infty$ is the first eigenvalue of \eqref{ciao2}, that is the smallest $\Lambda$ such that
\begin{equation*}
    \begin{cases}
    \min\Set{\abs{\nabla u} - \Lambda u, - \Delta_\infty u}=0 &\text{ in } \Omega,\\
    -\min\Set{\abs{\nabla u} - \beta u, -\displaystyle{\frac{\partial u}{\partial \nu}} }=0 &\text{ on } \partial \Omega
\end{cases}
\end{equation*}
admits a non-trivial solution.

\begin{teorema}
\label{firsteig}
Let $\Omega$ be a bounded and open set of class $C^2$ in $\R^n$. If for some $\Lambda$, problem \eqref{ciao2} admits a non-trivial eigenfunction $u$, then $\Lambda \geq \Lambda_\infty$.
\end{teorema}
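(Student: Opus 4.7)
My plan is to argue by contradiction through a comparison with a cone-type supersolution. Suppose, for contradiction, that for some $\Lambda<\Lambda_\infty$ problem \eqref{ciao2} admits a nontrivial viscosity solution $u$; by Lemma~\ref{charaterization} this is equivalent to $\tfrac{1}{\Lambda} > \tfrac{1}{\beta}+R_\Omega$. After normalising and using the sign for which $u$ achieves a positive maximum, let $M := \max_{\overline\Omega} u > 0$, attained at $x^*\in \overline\Omega$, and let $y_0\in\partial\Omega$ be a closest boundary point so that $d := |x^*-y_0| = d(x^*,\partial\Omega)\le R_\Omega$. Set $\Lambda_\phi := 1/(\tfrac{1}{\beta}+d)\ge \Lambda_\infty>\Lambda$.

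The comparison function I would use is the cone
\[
\phi(x):=\frac{M}{\tfrac{1}{\beta}+d}\Bigl(\tfrac{1}{\beta}+|x-y_0|\Bigr),
\]
which is smooth off $y_0$, satisfies $\phi(x^*)=M$, $|\nabla\phi|\equiv \Lambda_\phi M$, and $\Delta_\infty\phi\equiv 0$ (cones are $\infty$-harmonic). Consequently
\[
|\nabla\phi|-\Lambda\phi = (\Lambda_\phi-\Lambda)\phi > 0
\]
holds strictly on $\overline\Omega\setminus\{y_0\}$, and using the $C^2$-regularity of $\partial\Omega$ one checks that $\phi$ satisfies the Robin boundary condition in the supersolution sense ($|\nabla\phi|\ge\beta\phi$ and $\partial_\nu\phi\le 0$ there). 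Since the branch $-\Delta_\infty\phi\ge 0$ is only marginally saturated, I would perturb to $\tilde\phi := \phi+\varepsilon\psi$ with, say, $\psi(x):=-\langle x-x^*,\,\nabla\phi(x^*)/|\nabla\phi(x^*)|\rangle^2$. A direct computation using $e^{\top}D^2\phi(x^*)e=0$ for $e=\nabla\phi(x^*)/|\nabla\phi(x^*)|$ (the gradient direction of a cone) yields $\Delta_\infty\tilde\phi(x^*)=-2\varepsilon \Lambda_\phi^2 M^2<0$, while the gradient inequality $|\nabla\tilde\phi|-\Lambda\tilde\phi>0$ survives for $\varepsilon>0$ small enough.

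I would then locate the global maximum $z^*$ of $w := u-\tilde\phi$ on $\overline\Omega$. Assuming $z^*\ne y_0$, the viscosity subsolution property of $u$ at $z^*$ gives $\min\{|\nabla\tilde\phi(z^*)|-\Lambda \tilde\phi(z^*),\,-\Delta_\infty\tilde\phi(z^*)\}\le 0$ if $z^*\in\Omega$, or the analogous mixed inequality involving the Robin term if $z^*\in\partial\Omega$; both possibilities contradict the strict inequalities built into $\tilde\phi$, forcing $\Lambda\ge\Lambda_\infty$. The main obstacle is ruling out $z^*=y_0$, where $\tilde\phi$ fails to be $C^2$: this is handled either by further smoothing the cone near its vertex (e.g.\ replacing $|x-y_0|$ by $\sqrt{\eta^2+|x-y_0|^2}-\eta$) with a controlled error, or by showing via the boundary condition at $y_0$ that $u(y_0)<\phi(y_0)=M/(1+\beta d)$ so that $w(y_0)$ lies strictly below $\sup w$ and the maximum is attained in the smooth region.
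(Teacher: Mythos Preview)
Your cone comparison has a genuine gap: the strictness you need is verified only at the special point $x^*$, whereas the viscosity test must be applied at the (a~priori unknown) global maximum $z^*$ of $u-\tilde\phi$. Concretely, with $\psi(x)=-\langle x-x^*,e\rangle^2$ one has $D^2\psi=-2e\otimes e$ and
\[
\Delta_\infty\tilde\phi(x)=\bigl\langle D^2\phi(x)\,\nabla\tilde\phi(x),\nabla\tilde\phi(x)\bigr\rangle-2\varepsilon\,\langle\nabla\tilde\phi(x),e\rangle^2.
\]
Since $D^2\phi(x)=\tfrac{\Lambda_\phi M}{|x-y_0|}\bigl(I-\tfrac{(x-y_0)\otimes(x-y_0)}{|x-y_0|^2}\bigr)\ge 0$, the first term is nonnegative and vanishes only when $\nabla\tilde\phi$ is radial. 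At points where $(x-y_0)\perp e$ the second term is $O(\varepsilon^3)$ while the first is positive of order $\varepsilon^2/|x-y_0|$, so $-\Delta_\infty\tilde\phi(x)<0$ there; moreover near the vertex $y_0$ the Hessian of the cone blows up and no small perturbation can force $-\Delta_\infty\tilde\phi>0$. Thus the inequality $\min\{|\nabla\tilde\phi|-\Lambda u,\,-\Delta_\infty\tilde\phi\}>0$ is \emph{not} available at a generic $z^*$ and the contradiction does not follow. (There is also a slip: $|\nabla\phi|-\Lambda\phi\neq(\Lambda_\phi-\Lambda)\phi$; what the subsolution test actually yields at $z^*$ involves $\Lambda u(z^*)$, and that term is controlled by $u\le M$, so this part is salvageable --- the $\Delta_\infty$ branch is the real obstruction.)

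The paper avoids this by choosing a barrier whose strictness is \emph{global}: it works with
\[
g_{\varepsilon,\gamma}(x)=\tfrac{1}{\beta}+(1+\varepsilon)\,d(x,\partial\Omega)-\gamma\,d(x,\partial\Omega)^2,
\]
so that $\nabla g_{\varepsilon,\gamma}$ stays parallel to $\nabla d$, the term $\langle D^2d\,\nabla d,\nabla d\rangle$ vanishes, and one gets $|\nabla g_{\varepsilon,\gamma}|>1$ and $-\Delta_\infty g_{\varepsilon,\gamma}=2\gamma(1+\varepsilon-2\gamma d)^2>0$ everywhere in $\Omega$ (for $\gamma<\varepsilon/(2R_\Omega)$). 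Jensen's interior comparison then forces $\inf_\Omega(g_{\varepsilon,\gamma}-u)$ to be attained on $\partial\Omega$, where the Robin viscosity condition gives the desired lower bound, and letting $\varepsilon,\gamma\to0$ yields $u\le\tfrac{1}{\beta}+d(\cdot,\partial\Omega)$. If you want to rescue your approach you would need a barrier that is a strict supersolution of $\min\{|\nabla\cdot|-1,-\Delta_\infty\cdot\}=0$ on all of $\Omega$; a cone centered at a single boundary point cannot do this.
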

\begin{proof}
Let $\Lambda$ be an eigenvalue to \eqref{ciao2}, let $u$ be a corresponding  eigenfunction. We normalize it in this way
$$
\max_{x \in \Omega} u(x)= \frac{1}{\Lambda}.
$$
Then $u$ is viscosity subsolution to
$$
\min\Set{\abs{\nabla u} - 1, - \Delta_\infty u}=0 \text{ in } \Omega.
$$
For every $\varepsilon >0 $ and $\gamma >0$, let us consider the function $$g_{\varepsilon, \gamma} = \frac{1}{\beta} + (1+ \varepsilon) d(x , \partial \Omega ) - \gamma d(x , \partial \Omega)^2.$$

It is well known (see \cite{GT}) that in a small tubular neighbourhood $\Gamma_\mu$ of $\partial\Omega$, the boundary $\partial\Omega$ and the distance function $d(x,\partial\Omega)$ share the same regularity: so both $d(x,\partial\Omega)$ and $g_{\varepsilon,\gamma}$ are $C^2(\Gamma_\mu)$.


  Moreover, by a direct computation, one can check that if $$\gamma < \frac{\varepsilon}{2 R_\Omega}, $$  
then $g_{\varepsilon, \gamma}$ is a viscosity supersolution to
$$
\min\Set{\abs{\nabla g_{\varepsilon, \gamma}} - 1, - \Delta_\infty g_{\varepsilon, \gamma}}=0 \text{ in } \Omega.
$$

  Hence, Theorem $2.1$ in \cite{Jen} ensures that
$$
m_\varepsilon = \inf_{x \in \Omega} (g_{\varepsilon , \gamma}(x)-u(x)) = \inf_{x \in \partial \Omega} (g_{\varepsilon , \gamma}(x)-u(x)).
$$
Assume by contradiction that $m_\varepsilon < -\frac{\varepsilon}{\beta}$, and set $v = g_{\varepsilon , \gamma} - m_\varepsilon$. We observe that $v \geq u$ in $\Omega$ and $v(x_0)=u(x_0)$, where $x_0$ is the point which achieves the minimum on the boundary, so we can use it as test function in the definition of viscosity subsolution for $u$. 

Assuming $\gamma < \frac{\varepsilon}{2 R_\Omega}$,  we obtain

\begin{gather*}
    \nabla v(x ) = \left[ 1+ \varepsilon -2 \gamma d(x,\partial \Omega )\right] \nabla  d(x,\partial \Omega ) \\
\abs{\nabla v(x_0)}= 1+ \varepsilon -2 \gamma d(x_0,\partial \Omega )>1\\
-\frac{\partial v}{\partial \nu}(x_0) = - \left[ 1+ \varepsilon -2 \gamma d(x_0,\partial \Omega )\right] \nabla  d(x_0,\partial \Omega )\cdot \nu >0\\
- \Delta_\infty v (x_0)= 2\gamma \left[1+ \varepsilon - 2\gamma d(x_0, \partial \Omega)\right]^2\abs{\nabla d(x_0, \partial \Omega)}^4 >0.
\end{gather*}
The fact that $m_\varepsilon < -\frac{\varepsilon}{\beta}$ implies
\begin{equation*}
    \abs{\nabla v(x_0)} - \beta v(x_0) = \varepsilon+ \beta  m_\varepsilon <0.
\end{equation*}
  Therefore 
$$
-\min \left\lbrace \abs{\nabla v} - \beta v, -\frac{\partial v}{\partial \nu} \right\rbrace >0 \quad \text{ and } \quad \min \left\lbrace \abs{\nabla v} -  1, -\Delta_\infty v \right\rbrace >0
$$
against the fact that
$$
\min \left\lbrace \min \left\lbrace \abs{\nabla v} -  1, -\Delta_\infty v \right\rbrace ,-\min \left\lbrace \abs{\nabla v} - \beta v, -\frac{\partial v}{\partial \nu} \right\rbrace \right\rbrace\leq0.
$$

So we have 
$$
g_{\varepsilon , \gamma}(x) - u(x) \geq m_\varepsilon \geq - \frac{\varepsilon}{\beta},
$$
letting $\varepsilon $ and $\gamma $ go to zero, it follows
$$
\frac{1}{\beta} + d(x,\partial \Omega ) \geq u(x) \qquad \forall x \in \Omega.
$$

  Hence
$$
\frac{1}{\Lambda_\infty}= \max_{x \in \Omega}\left(\frac{1}{\beta} + d(x,\partial \Omega )\right) \geq \max_{x \in \Omega} u(x)= \frac{1}{\Lambda},
$$
which concludes the proof.
\end{proof}

\section{The $p$-Poisson equation}
\label{poisso}
Let $f$ be a function in $ L^{\infty}(\Omega)$ and let $\beta >0$. We consider the following $p$-Poisson equation with Robin boundary conditions
\begin{equation}
\label{p_originale}
\begin{cases}
-\Delta_p v= f & \text{ in } \Omega \\
\abs{\nabla v}^{p-2} \displaystyle{\frac{\partial v}{\partial \nu}} + \beta^p  \abs{v}^{p-2}v =0  & \text{ on } \partial \Omega.
\end{cases}
\end{equation}
A function $v_p$ is a weak solution to \eqref{p_originale} if it satisfies
\begin{equation}
\label{weak-f}
    \int _\Omega \abs{\nabla v_p}^{p-2} \nabla v_p \nabla \varphi \, dx+ \beta^p \int_{\partial\Omega}\abs{v_p}^{p-2}v_p\varphi \, d \mathcal{H}^{n-1} = \int_{\Omega} f \varphi \, dx, \quad \forall \varphi \in W^{1,p}(\Omega).
\end{equation}

It is well known that the solution to this equation is the unique minimum of the functional

\begin{equation}
    \label{funzi}
    J_p(\varphi)=\frac{1}{p} \int_{\Omega} \abs{\nabla \varphi}^p \, dx +\frac{\beta^p}{p}\int_{\partial\Omega} \abs{\varphi}^p \, d\mathcal{H}^{n-1}(x) - \int_{\Omega} f\varphi \, dx.
\end{equation}
Indeed, it is possible to prove the existence and the uniqueness of the minimum of the functional thanks to the so-called direct method of calculus of variation, see for instance \cite{Dac, G, Lind,AGM}.

  If we let \emph{formally} $p$ go to $\infty$ in \eqref{funzi}, we obtain the functional

\begin{equation}
    \label{limi}
     \varphi \longrightarrow \min \int_\Omega -f \varphi\, dx \quad \varphi \in W^{1,\infty}(\Omega).
\end{equation}
The limit procedure imposes two extra constraints to \eqref{limi}, namely

$$\norma{\nabla \varphi}_{\infty}\le 1, \quad \beta\norma{ \varphi}_{L^\infty(\partial\Omega)}\le 1.$$ 

The following proposition holds true.
\begin{prop}
\label{uinff}
Let $v_p$ be the solution to \eqref{p_originale}. Then there exists a subsequence $\Set{v_{p_j}}_{j}$ such that
$$ v_{p_j} \to v_\infty     \,  \text{ uniformly,} \qquad
    \nabla v_{p_j} \to \nabla v_\infty \, \text{ weakly in } \, L^m(\Omega), \, \forall m >1.
$$
Moreover
$$
\norma{\nabla v_\infty}_\infty \leq 1 \qquad \beta \norma{ v_\infty}_{L^\infty(\partial\Omega)} \leq 1.
$$
\end{prop}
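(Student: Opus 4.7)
The plan is to adapt the argument of Lemma \ref{th1} to the inhomogeneous problem: first establish uniform $W^{1,q}$-bounds on $\{v_p\}$ for every finite $q$, then use Sobolev compactness to extract the subsequence, and finally derive the two $L^\infty$ bounds by weak lower semicontinuity upon letting the integrability exponent tend to infinity.

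The key quantity to control is
\[
A_p := \left(\int_\Omega \abs{\nabla v_p}^p\,dx + \beta^p \int_{\partial\Omega} \abs{v_p}^p\,d\mathcal{H}^{n-1}\right)^{1/p}.
\]
Choosing $\varphi = v_p$ in the weak formulation \eqref{weak-f} yields the energy identity $A_p^p = \int_\Omega f v_p\,dx$. By H\"older and the Rayleigh characterization \eqref{Rel} of $\Lambda_p$, which controls $\norma{v_p}_{L^p(\Omega)}$ by $\Lambda_p^{-1/p} A_p$, one obtains $A_p^{p-1}\le \norma{f}_\infty\abs{\Omega}^{1-1/p}\Lambda_p^{-1/p}$. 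Since $\Lambda_p^{1/p}\to\Lambda_\infty\in(0,\infty)$ by Lemma \ref{th1}, the right-hand side stays bounded in $p$, and taking the $(p-1)$-th root gives $\limsup_{p\to\infty} A_p \le 1$. H\"older's inequality then upgrades this into uniform $L^q$-bounds on $v_p$, $\nabla v_p$ and the trace $v_p|_{\partial\Omega}$ for every fixed $q\in(1,\infty)$, namely $\norma{\nabla v_p}_{L^q(\Omega)}\le\abs{\Omega}^{1/q-1/p}A_p$ and $\beta\norma{v_p}_{L^q(\partial\Omega)}\le \abs{\partial\Omega}^{1/q-1/p} A_p$, together with $\norma{v_p}_{L^q(\Omega)}\le\abs{\Omega}^{1/q-1/p}\Lambda_p^{-1/p}A_p$.

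Once $\{v_p\}$ is uniformly bounded in $W^{1,q}(\Omega)$ for every $q$, the standard diagonal argument of \cite{BDM}, combined with the compact embedding $W^{1,q}(\Omega)\hookrightarrow C(\overline{\Omega})$ for $q>n$, extracts a subsequence $v_{p_j}\to v_\infty$ uniformly on $\overline{\Omega}$ with $\nabla v_{p_j}\rightharpoonup\nabla v_\infty$ weakly in $L^m(\Omega)$ for every $m>1$. Weak lower semicontinuity applied to the bound above gives, for fixed $m$ and $p_j\ge m$,
\[
\norma{\nabla v_\infty}_{L^m(\Omega)}\le \liminf_{j\to\infty}\norma{\nabla v_{p_j}}_{L^m(\Omega)}\le \abs{\Omega}^{1/m},
\]
and letting $m\to\infty$ produces $\norma{\nabla v_\infty}_{L^\infty(\Omega)}\le 1$. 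The same scheme on the boundary, using continuity of the trace together with $\beta\norma{v_{p_j}}_{L^m(\partial\Omega)}\le\abs{\partial\Omega}^{1/m-1/p_j}A_{p_j}$, yields $\beta\norma{v_\infty}_{L^\infty(\partial\Omega)}\le 1$ after passing to the limit $m\to\infty$.

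The main obstacle is Step 1: closing the bootstrap $A_p^{p-1}\le C(p)$ with $C(p)$ uniformly bounded requires the non-degeneracy $\liminf_{p\to\infty}\Lambda_p^{1/p}=\Lambda_\infty>0$ from Lemma \ref{th1}, without which the Robin--Rayleigh comparison would fail to produce a $p$-independent bound for $\norma{v_p}_{L^p(\Omega)}$ in terms of $A_p$ and the exponent $1/(p-1)$ applied to an unbounded quantity would not yield $\limsup A_p\le 1$. Once this step is in place, the rest reduces to routine Sobolev compactness and weak lower semicontinuity.
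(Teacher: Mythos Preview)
Your proof is correct and follows essentially the same route as the paper: test \eqref{weak-f} with $v_p$, invoke the Rayleigh characterization \eqref{Rel} together with $\Lambda_p^{1/p}\to\Lambda_\infty>0$ to bound the energy $A_p$, then extract by diagonalization and conclude via weak lower semicontinuity. The only cosmetic difference is that the paper absorbs $\norma{v_p}_{L^p}$ using Young's inequality with a tuned parameter $\varepsilon_p$, whereas you use H\"older directly to reach $A_p^{p-1}\le \norma{f}_\infty\abs{\Omega}^{1-1/p}\Lambda_p^{-1/p}$; both yield $\limsup_{p\to\infty}A_p\le 1$ and the remaining steps coincide.
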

\begin{proof}
  Choosing $\varphi= v_p$ in \eqref{weak-f}, we have
$$
\int _\Omega \abs{\nabla v_p}^{p}  + \beta^p \int_{\partial \Omega} v_p^{p}= \int_{\Omega} f v_p,
$$
and Young inequality gives
\begin{equation}
\label{yeye}
\int _\Omega \abs{\nabla v_p}^{p}  + \beta^p \int_{\partial \Omega} v_p^{p} - \frac{1}{\varepsilon_p^p p} \int_\Omega v_p^p\leq \frac{\varepsilon_p^{p'}}{p'} \int_\Omega f^{p'}.
\end{equation}
Taking into account 
\eqref{Rel}, we get
\begin{equation*}
   \begin{multlined}
\left( 1- \frac{1}{p \Lambda_p \varepsilon_p^p }\right) \left[ \int_\Omega \abs{\nabla v_p}^{p}  + \beta^p \int_{\partial \Omega} v_p^{p}\right] \leq  \qquad \qquad \qquad\\
    \qquad\leq\int_\Omega \abs{\nabla v_p}^{p}  + \beta^p \int_{\partial \Omega} v_p^{p} - \frac{1}{\varepsilon_p^p p} \int_\Omega v_p^p\leq \frac{\varepsilon_p^{p'}}{p'} \int_\Omega f^{p'}.
\end{multlined} 
\end{equation*}

Choosing $\varepsilon_p $ such that $\displaystyle{1- \frac{1}{p \Lambda_p \varepsilon_p^p }=\frac{1}{2}}$, we have
\begin{equation}
\label{stima}
    \int_\Omega \abs{\nabla v_p}^p+ \beta^p\int_{\partial \Omega} v_p^p\le 2 \frac{\varepsilon_p^{p'}}{p'} \int_\Omega f^{p'} \leq C \int_\Omega f^{p'}
\end{equation}
 where the constant  $C$ is independent of $p$, thanks to Lemma \ref{lemma1}.

Hence 
\begin{gather*}
    \left(\int _\Omega \abs{\nabla v_p}^{p} \right)^{1/p} \leq\left( C \int_\Omega f^{p'}\right)^{1/p} \leq \left(C \abs{\Omega}\, \norma{f}^{p'}_\infty \right)^{1/p},\\
\left(\beta^p\int _{\partial \Omega}  v_p^{p} \right)^{1/p} \leq\left( C \int_\Omega f^{p'}\right)^{1/p} \leq \left(C \abs{\Omega}\, \norma{f}^{p'}_\infty \right)^{1/p}.
\end{gather*}

Analogously
\begin{equation}
    \label{stima 2}
   \left( \int_\Omega  v_p^p\right)^{1/p}\leq C\left( \int_\Omega f^{p'}\right)^{1/p}.
\end{equation}

If $p>m$, H\"older inequality gives
\begin{equation}
    \label{gras}
    \begin{gathered}
    \left(\int _\Omega \abs{\nabla v_p}^{m} \right)^{1/m}  \leq \left(\int _\Omega \abs{\nabla v_p}^{p} \right)^{1/p} \abs{\Omega}^{1/m - 1/p }\leq \left( C  \norma{f}^{p'}_\infty \right)^{1/p}\abs{\Omega}^{1/m },\\
    \beta \left(\int _{\partial \Omega} v_p^{m} \right)^{1/m}  \leq \left(\beta^p\int _{\partial \Omega}  v_p^{p} \right)^{1/p} \abs{\partial \Omega}^{1/m - 1/p }\leq \left( C  \norma{f}^{p'}_\infty \right)^{1/p}\abs{\partial \Omega}^{1/m },\\
    \left(\int_\Omega v_p^m\right)^{1/m} \leq  \left(\int_\Omega v_p^p\right)^{1/p} \abs{\Omega}^{1/m-1/p} \leq C\left( \norma{f}_\infty^{p'}\right)^{1/p}\abs{\Omega}^{1/m},
    \end{gathered}
\end{equation}
Then there exists  $v_{p_j}$ such that
\begin{align*}
     v_{p_j} \to v_\infty \,  &\text{ uniformly,} \qquad
    \nabla v_{p_j} \to \nabla v_\infty \, \text{ weakly in } \, L^m(\Omega), \, \forall m >1,
\end{align*}
moreover
\begin{gather*}
    \norma{\nabla v_\infty}_m \leq \liminf_{j \to \infty} \norma{\nabla v_{p_j}}_m \leq \lim_{j \to \infty} \left( C  \norma{f}^{p_j'}_\infty \right)^{1/{p_j}}\abs{\Omega}^{1/m }= \abs{\Omega}^{1/m } \\
    \beta \norma{v_\infty}_{L^m(\partial\Omega)} =\beta \lim_{j\to\infty} \norma{ v_{p_j}}_{L^m(\partial\Omega)} \leq \lim_{j\to\infty} \left( C  \norma{f}^{p_j'}_\infty \right)^{1/p_j}\abs{\partial\Omega}^{1/m }= \abs{\partial\Omega}^{1/m }
\end{gather*}

and then $$  \norma{\nabla v_\infty}_{L^\infty(\Omega)} \le 1 \quad \quad \beta \norma{v_\infty}_{L^\infty(\partial\Omega)}\le 1. 
$$
\end{proof}

We are also able  to link the so obtained function $v_\infty$ with the functional \eqref{limi}, indeed
\begin{teorema}\label{th42}
The functional 
\begin{equation}
\label{minimo}
    J_\infty(\varphi)= - \int_\Omega f \varphi \qquad \varphi \in W^{1,\infty}(\Omega)
\end{equation}
admits at least one minimum $\overline{\varphi}$ satisfying $\norma{ \nabla \overline{\varphi}}_{L^{\infty}( \Omega)}  \leq 1$ and $\beta \norma{\overline{\varphi}}_{L^{\infty}(\partial \Omega)} \leq 1$.

Moreover, if $v_\infty$ is a limit of a subsequence of $\{v_p\}$, then $v_\infty$ is a minimizer of \eqref{minimo}.
\end{teorema}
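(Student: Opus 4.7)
The plan is to treat the two assertions separately: existence of a minimizer by the direct method on the admissible set induced by the two constraints, and identification of $v_\infty$ by a $\Gamma$-convergence type inequality comparing $J_p$ and $J_\infty$.

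\textbf{Existence.} Let $K=\{\varphi\in W^{1,\infty}(\Omega):\norma{\nabla\varphi}_{L^\infty(\Omega)}\le 1,\ \beta\norma{\varphi}_{L^\infty(\partial\Omega)}\le 1\}$. The set $K$ is non-empty (it contains the zero function), convex and closed under uniform convergence. Repeating the pointwise estimate used in the proof of Lemma \ref{charaterization}, for every $\varphi\in K$ and every $x\in\Omega$ one has $\abs{\varphi(x)}\le d(x,\partial\Omega)+1/\beta\le R_\Omega+1/\beta$, so $K$ is equi-bounded and uniformly $1$-Lipschitz. Given a minimizing sequence $\{\varphi_n\}\subset K$, Arzelà–Ascoli furnishes a subsequence converging uniformly in $\overline\Omega$ to some $\overline\varphi$, while the gradients $\nabla\varphi_n$ converge weakly-$\ast$ in $L^\infty(\Omega)$ to $\nabla\overline\varphi$. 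Weak-$\ast$ lower semicontinuity of the $L^\infty$ norm (together with continuity of the trace operator) yields $\overline\varphi\in K$, and since $J_\infty$ is linear and continuous under uniform convergence, $\overline\varphi$ realises the infimum.

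\textbf{Identification of $v_\infty$.} By Proposition \ref{uinff}, $v_\infty\in K$, so it is an admissible competitor. The strategy is to sandwich $J_\infty(v_\infty)$ between $\liminf_p J_p(v_p)$ and $J_\infty(\varphi)$ for an arbitrary $\varphi\in K$. Fix such a $\varphi$: its admissibility yields
$$
\frac{1}{p}\int_\Omega \abs{\nabla\varphi}^p\,dx\le \frac{\abs{\Omega}}{p}\to 0,\qquad \frac{\beta^p}{p}\int_{\partial\Omega}\abs{\varphi}^p\,d\mathcal{H}^{n-1}\le \frac{\abs{\partial\Omega}}{p}\to 0,
$$
hence $\limsup_{p\to\infty} J_p(\varphi)\le -\int_\Omega f\varphi\,dx=J_\infty(\varphi)$. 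By minimality of $v_p$ one has $J_p(v_p)\le J_p(\varphi)$; at the same time the two penalty terms in $J_p(v_p)$ are non-negative and the uniform convergence of the subsequence $v_{p_j}\to v_\infty$ from Proposition \ref{uinff} forces $-\int_\Omega f v_{p_j}\to -\int_\Omega f v_\infty$, so $\liminf_j J_{p_j}(v_{p_j})\ge J_\infty(v_\infty)$. Combining the two inequalities gives $J_\infty(v_\infty)\le J_\infty(\varphi)$ for every $\varphi\in K$, which is the required minimality.

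\textbf{Main obstacle.} The only non-routine point is producing the two vanishing-in-$p$ estimates above; it is precisely these bounds that legitimate the formal passage to $p=\infty$ in $J_p$ and that single out the constraint set $K$ in the limit. Once they are in hand, the remainder is a textbook direct-method plus lower-semicontinuity argument.
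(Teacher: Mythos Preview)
Your proof is correct, but it follows a genuinely different route from the paper's argument.

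The paper proceeds by contradiction: assuming $v_\infty$ is not a minimizer, it picks a competitor $\varphi\in K$ with $-\int_\Omega f\varphi<-\int_\Omega f v_\infty$, passes to the subsequence $v_{p_i}$, and then splits into two cases according to whether the energy $\int_\Omega|\nabla\varphi|^{p_i}+\beta^{p_i}\int_{\partial\Omega}|\varphi|^{p_i}$ is ever dominated by the corresponding quantity for $v_{p_i}$. In the first case one directly contradicts the minimality of $v_{p_i}$ for $J_{p_i}$; in the second, one rescales $\varphi$ by a factor $\alpha\in(0,1)$ and exploits that $\alpha^{p_i}\to 0$ to force the rescaled energy below that of $v_{p_i}$, again contradicting minimality. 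Existence of a minimizer is obtained as a by-product, since $v_\infty$ itself is one.

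Your approach is the $\Gamma$-convergence skeleton: a direct-method existence step on $K$ (which the paper does not spell out separately), followed by the chain
\[
J_\infty(v_\infty)\;\le\;\liminf_j J_{p_j}(v_{p_j})\;\le\;\limsup_j J_{p_j}(\varphi)\;\le\;J_\infty(\varphi),
\]
where the first inequality comes from dropping the nonnegative penalty terms and the last from the bounds $\tfrac{1}{p}\int|\nabla\varphi|^p\le|\Omega|/p$ and $\tfrac{\beta^p}{p}\int_{\partial\Omega}|\varphi|^p\le|\partial\Omega|/p$ for $\varphi\in K$. This is shorter and avoids the case split entirely; it also makes transparent that the constraint set $K$ arises exactly as the effective domain of the $\Gamma$-limit. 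The paper's argument, by contrast, is more hands-on and does not rely on recognizing the variational limit structure.
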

\begin{proof}
Let $v_\infty$ a limit of a subsequence of $\{v_p\}$ and let us assume it is not a minimum of $J_\infty$. This implies that $\exists \varphi \in W^{1,\infty}(\Omega)$ such that
$$
-\int_\Omega f\varphi < - \int_\Omega f v_\infty.
$$
We want to show that there exists a function $\phi$ and an exponent $p$, such that $J_p(\phi) < J_p(v_p)$, which contradicts the minimality of $v_p$.

  First of all, we recall that exists a sequence $v_{p_i}\rightharpoonup v_\infty$ in $W^{1,m}(\Omega)\, \, \forall m$. Then 
$$
\int_\Omega f v_{p_i} \to \int_\Omega f v_\infty
$$
and so there exists $\overline{i}$ for which we still have 
\begin{equation}
\label{assurd}
    -\int_\Omega f\varphi < -\int_\Omega f v_{p_i} \qquad \forall i > \overline{i}.
\end{equation}

  Two cases can occur:
\begin{description}
\item[case 1] $\exists i > \overline{i} $
$$
\int_\Omega \abs{\nabla \varphi}^{p_i} + \beta^{p_i} \int_{\partial \Omega } \abs{\varphi}^{p_i}  \leq \int_\Omega \abs{\nabla v_{p_i}}^{p_i} + \beta^{p_i} \int_{\partial \Omega }  v_{p_i}^{p_i}   
$$
  Then
\begin{equation*}
    \begin{split}
    J_{p_i}(\varphi) &= \frac{1}{p}\int_\Omega \abs{\nabla \varphi}^{p_i} + \frac{\beta^{p_i}}{p} \int_{\partial \Omega } \abs{\varphi}^{p_i}  -\int_\Omega f\varphi  \qquad \\
    &< \frac{1}{p}\int_\Omega \abs{\nabla v_{p_i}}^{p_i} + \frac{\beta^{p_i}}{p} \int_{\partial \Omega }  v_{p_i}^{p_i} -\int_\Omega f v_{p_i}  = J_{p_i}(v_{p_i}),
    \end{split}
\end{equation*}
  which is a contradiction.
\item[case 2] $\forall i >\overline{i}$
$$
\int_\Omega \abs{\nabla \varphi}^{p_i} + \beta^{p_i} \int_{\partial \Omega } \abs{\varphi}^{p_i}  > \int_\Omega \abs{\nabla v_{p_i}}^{p_i} + \beta^{p_i} \int_{\partial \Omega }  v_{p_i}^{p_i}  .
$$
Considering $\phi = \alpha \varphi$ with $\alpha \in (0,1)$:
$$
-\int_\Omega f\phi= -  \alpha \int_\Omega f \varphi < -\int_\Omega f v_{p_i} < 0 \qquad \forall i > \overline{i},
$$
we have 
$$
\int_\Omega \abs{\nabla \phi}^{p_i} + \beta^{p_i} \int_{\partial \Omega } \abs{\phi}^{p_i}  = \alpha^{p_i}\left[ \int_\Omega \abs{\nabla \varphi}^{p_i} + \beta^{p_i} \int_{\partial \Omega }  \abs{\varphi}^{p_i}\right].
$$
Moreover
\begin{equation*}
    \begin{cases}
    \displaystyle{ M \leq \int_\Omega f v_{p_i} = \int_\Omega \abs{\nabla v_{p_i}}^{p_i} + \beta^{p_i} \int_{\partial \Omega }  v_{p_i}^{p_i}}\\
     \displaystyle{\int_\Omega \abs{\nabla \varphi}^{p_i} + \beta^{p_i} \int_{\partial \Omega } \abs{\varphi}^{p_i} \leq  \abs{\Omega} \norma{ \nabla \varphi}_{L^{\infty}( \Omega)}^{p_i} + \beta^{p_i} \abs{\partial \Omega}\norma{\varphi}_{L^{\infty}(\partial \Omega)}^{p_i}   \leq \abs{\Omega} + \abs{\partial \Omega}}.
    \end{cases}
\end{equation*}
We now choose $p_i$:
$$
0 \xleftarrow{i \to \infty} \alpha^{p_i} \leq \frac{M}{\abs{\Omega} +  \abs{\partial \Omega}} \leq \frac{\int_\Omega \abs{\nabla v_{p_i}}^{p_i} + \beta^{p_i} \int_{\partial \Omega }  v_{p_i}^{p_i}}{\int_\Omega \abs{\nabla \varphi}^{p_i} + \beta^{p_i} \int_{\partial \Omega } \abs{\varphi}^{p_i}}
$$
obtaining
$$
\int_\Omega \abs{\nabla \phi}^{p_i} + \beta^{p_i} \int_{\partial \Omega } \abs{\phi}^{p_i}  \leq \int_\Omega \abs{\nabla v_{p_i}}^{p_i} + \beta^{p_i} \int_{\partial \Omega }  v_{p_i}^{p_i}. \qedhere
$$
\end{description}
\end{proof}

\begin{prop}
\label{prop1}
Let $v_p$ be the solution to \eqref{p_originale} and let $v_\infty$ be any  limit of a subsequence of $\Set{v_p}_{p>1}$. Then
\begin{equation}
    \label{minoreq}
    v_\infty(x)\le \frac{1}{\beta}+ d(x,\partial\Omega).
\end{equation}
\end{prop}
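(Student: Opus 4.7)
The plan is to derive \eqref{minoreq} directly from the two bounds already established in Proposition \ref{uinff}, namely $\norma{\nabla v_\infty}_{L^\infty(\Omega)}\le 1$ and $\beta\norma{v_\infty}_{L^\infty(\partial\Omega)}\le 1$, using a one-line slant-Lipschitz estimate connecting an interior point to its nearest boundary point.

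First I would note that since $\Omega$ has Lipschitz boundary and $v_\infty\in W^{1,\infty}(\Omega)$, we can work with the continuous (indeed Lipschitz) representative of $v_\infty$; in particular $v_\infty$ is defined pointwise on $\overline{\Omega}$ and $|v_\infty(a)-v_\infty(b)|\le \norma{\nabla v_\infty}_{L^\infty(\Omega)}|a-b|$ for all $a,b\in\overline{\Omega}$ along a segment in $\Omega$ (which, for $a\in\Omega$ and $b\in\partial\Omega$ chosen as the closest boundary point, is automatic).

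Then, fix $x\in\Omega$ and choose $y\in\partial\Omega$ realizing $|x-y|=d(x,\partial\Omega)$; the segment $[x,y]$ lies in $\overline\Omega$ by definition of the distance to the boundary. Combining the two bounds of Proposition \ref{uinff},
\begin{equation*}
v_\infty(x)\le v_\infty(y)+|v_\infty(x)-v_\infty(y)|\le \norma{v_\infty}_{L^\infty(\partial\Omega)}+\norma{\nabla v_\infty}_{L^\infty(\Omega)}\,|x-y|\le \frac{1}{\beta}+d(x,\partial\Omega),
\end{equation*}
which is exactly \eqref{minoreq}.

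I do not expect any real obstacle here; the only small technical point is to justify the Lipschitz estimate along the segment $[x,y]$ when this segment touches $\partial\Omega$ only at its endpoint $y$. This is standard for $W^{1,\infty}$ functions on a Lipschitz domain (one applies the Lipschitz bound on the open segment and extends by continuity to $y$, using that $v_\infty$ is continuous on $\overline\Omega$ since it is a uniform limit of the continuous functions $v_{p_j}$). No appeal to viscosity theory or to the minimization characterization in Theorem \ref{th42} is needed for this pointwise bound.
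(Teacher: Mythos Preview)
Your argument is correct and is essentially identical to the paper's own proof: both use the bounds $\norma{\nabla v_\infty}_{L^\infty(\Omega)}\le 1$ and $\beta\norma{v_\infty}_{L^\infty(\partial\Omega)}\le 1$ from Proposition~\ref{uinff}, pick $y\in\partial\Omega$ realizing $d(x,\partial\Omega)$, and combine the Lipschitz estimate with the boundary bound to get $v_\infty(x)\le v_\infty(y)+|x-y|\le \tfrac{1}{\beta}+d(x,\partial\Omega)$. Your added remarks about the segment $[x,y]$ lying in $\overline\Omega$ and the continuity of $v_\infty$ on $\overline\Omega$ are just minor technical clarifications of the same idea.
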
 
\begin{proof}
We notice that
$$\abs{v_\infty(x)-v_\infty(y)}\le \abs{x-y}$$
as we have proven that $\norma{\nabla v_\infty}_\infty\le 1$. This holds true for every $x,y$ in $\Omega$. In particular, we can choose $y$ equal to the point on the boundary which realizes $\abs{x-y}=d(x,\partial\Omega)$. So, we have

$$ v_\infty(x)\le v_\infty(y)+d(x, \partial\Omega)\le \frac{1}{\beta}+d(x, \partial\Omega),$$
as $v_\infty$ also satisfies $\beta \norma{v_\infty}_{L^\infty(\partial\Omega)}\le1$.
\end{proof}
\begin{oss}
\label{remark4.1}
We explicitly observe that the estimate $\varphi(x)\le \frac{1}{\beta}+d(x,\partial\Omega)$ holds for every admissible function $\varphi$.
\end{oss}

\begin{prop}
\label{prop2}
Assume $f>0$ in $\Omega$. Then the sequence of solution to \eqref{p_originale} converges strongly in $W^{1,m}(\Omega)$, for all $m>1$, to
$$\overline{v}_\infty(x)=\frac{1}{\beta}+d(x,\partial\Omega).$$
\end{prop}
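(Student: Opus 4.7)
\textbf{Plan for Proposition \ref{prop2}.} The strategy is to identify $\overline{v}_\infty(x)=\frac{1}{\beta}+d(x,\partial\Omega)$ as the unique minimizer of the limit functional $J_\infty$ under the hypothesis $f>0$, conclude that the entire sequence $\{v_p\}$ (not just a subsequence) converges to it weakly in $W^{1,m}$, and finally upgrade weak convergence to strong convergence by proving convergence of the $L^m$ norms of the gradients.

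\emph{Step 1: identifying the minimizer.} First I check that $\overline{v}_\infty$ is admissible, i.e.\ $\|\nabla \overline{v}_\infty\|_{L^\infty(\Omega)}\le 1$ and $\beta\|\overline{v}_\infty\|_{L^\infty(\partial\Omega)}\le 1$. Both hold because $d(\cdot,\partial\Omega)$ is $1$-Lipschitz with $|\nabla d|=1$ almost everywhere in $\Omega$, and $d\equiv 0$ on $\partial\Omega$. By Remark \ref{remark4.1}, any admissible competitor $\varphi$ satisfies $\varphi(x)\le \overline{v}_\infty(x)$ pointwise, so from $f>0$ I get
\[
-\int_\Omega f\varphi \ge -\int_\Omega f\overline{v}_\infty,
\]
proving that $\overline{v}_\infty$ minimizes $J_\infty$. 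If $v_\infty$ is a limit along a subsequence obtained from Proposition \ref{uinff}, then Theorem \ref{th42} says $v_\infty$ is also a minimizer; combining this with $v_\infty \le \overline{v}_\infty$ and $f>0$ forces $\int_\Omega f(\overline{v}_\infty - v_\infty)=0$ with non-negative integrand, hence $v_\infty=\overline{v}_\infty$ a.e., and in fact everywhere by continuity.

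\emph{Step 2: convergence of the whole sequence.} Since every subsequential limit equals $\overline{v}_\infty$, the compactness already established in Proposition \ref{uinff} implies that the full sequence $v_p$ converges uniformly to $\overline{v}_\infty$ on $\overline\Omega$ and $\nabla v_p\rightharpoonup \nabla\overline{v}_\infty$ weakly in $L^m(\Omega)$ for every $m>1$.

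\emph{Step 3: strong convergence in $W^{1,m}$.} I use the bound from \eqref{gras} in the proof of Proposition \ref{uinff}:
\[
\|\nabla v_p\|_{L^m(\Omega)} \le \bigl(C\|f\|_\infty^{p'}\bigr)^{1/p}|\Omega|^{1/m}.
\]
As $p\to\infty$, one has $p'/p\to 0$ and $C^{1/p}\to 1$, so the right-hand side tends to $|\Omega|^{1/m}$. On the other hand, since $|\nabla d(\cdot,\partial\Omega)|=1$ a.e.\ in $\Omega$, we have $\|\nabla\overline{v}_\infty\|_{L^m(\Omega)}=|\Omega|^{1/m}$. Combined with weak lower semicontinuity of the norm, this yields $\|\nabla v_p\|_{L^m}\to\|\nabla\overline{v}_\infty\|_{L^m}$. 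Because $L^m(\Omega)$ is uniformly convex for $m>1$, weak convergence plus convergence of norms (Radon–Riesz) gives $\nabla v_p\to \nabla\overline{v}_\infty$ strongly in $L^m$. Uniform convergence of $v_p$ supplies strong $L^m$ convergence of $v_p$ itself, and the conclusion $v_p\to\overline{v}_\infty$ strongly in $W^{1,m}(\Omega)$ follows.

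The step that really uses the hypothesis, and that I expect to be the only delicate point, is the uniqueness of the minimizer in Step 1: without $f>0$, the inequality $v_\infty\le\overline{v}_\infty$ could be strict on a set of positive measure without affecting the value of $J_\infty$, and one could not single out the distance function as the limit. Positivity of $f$ is exactly what kills this degeneracy and turns weak subsequential compactness into convergence of the full sequence.
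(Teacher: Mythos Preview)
Your proof is correct and matches the paper closely in Steps~1 and~2: the identification of the limit via minimality of $J_\infty$, the pointwise bound from Remark~\ref{remark4.1}, and the subsequence argument to get convergence of the whole family are identical to what the paper does.

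The only genuine difference is in Step~3. The paper establishes strong convergence by applying Clarkson's inequality to the pair $\nabla v_p,\nabla v_q$ and showing directly that $\bigl\lVert\nabla v_p-\nabla v_q\bigr\rVert_{L^m}\to 0$, i.e.\ that the sequence of gradients is Cauchy in $L^m$. You instead invoke the Radon--Riesz property of the uniformly convex space $L^m$: weak convergence together with convergence of norms implies strong convergence. Both arguments rest on the same two ingredients, namely the a priori bound $\limsup_p\lVert\nabla v_p\rVert_{L^m}\le\lvert\Omega\rvert^{1/m}$ from \eqref{gras} and the identity $\lVert\nabla\overline{v}_\infty\rVert_{L^m}=\lvert\Omega\rvert^{1/m}$, so the difference is purely in packaging. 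Your route is slightly more streamlined and handles all $m>1$ at once, whereas the Clarkson inequality as written in the paper is the form valid for $m\ge 2$ (the case $1<m<2$ would require the dual Clarkson inequality, a point the paper glosses over).
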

\begin{proof}
Let $v_\infty $ be any limit of a subsequence $\Set{v_{p_j}}\subset \Set{v_p}$. 
Theorem \ref{th42} gives that $v_\infty$ is a minimum of the functional $J_\infty$, in the class $\{\varphi\in  W^{1,\infty}(\Omega):\, \norma{\nabla\varphi}_\infty\le 1,\, \beta \norma{\varphi}_{L^\infty(\partial\Omega)}\le1\}$.

The function $\frac{1}{\beta}+d(x,\partial\Omega)$ is a competitor and then
\begin{equation}
\label{lolo}
    \int_\Omega f \left(v_\infty- \frac{1}{\beta}- d(x, \partial\Omega)\right)\ge0.
\end{equation}
Then \eqref{minoreq} gives  $v_\infty(x)=\frac{1}{\beta}+d(x,\partial\Omega)$.

  Since every subsequence of $\Set{v_p}$ has a subsequence converging to $\frac{1}{\beta}+d(x,\partial\Omega)$ weakly in $W^{1,m}(\Omega)$, the whole sequence $\Set{v_p}$ converges to $\frac{1}{\beta}+d(x,\partial\Omega)$ weakly in $W^{1,m}(\Omega)$, and in particular, in $C^{\alpha}(\overline{\Omega})$ and its gradient weakly in $L^m(\Omega)$.

It remains to prove the strong convergence in $W^{1,m}(\Omega)$.
 
Clarkson's inequality  implies for $p,q >m $
\begin{equation*}
    \begin{multlined}
    \int_\Omega \frac{\abs{\nabla v_p + \nabla v_q}^m}{2^m} + \int_\Omega \frac{\abs{\nabla v_p - \nabla v_q}^m}{2^m} \leq \frac{1}{2}\int_\Omega \abs{\nabla v_p}^m +\frac{1}{2}\int_\Omega \abs{\nabla v_q}^m 
    \end{multlined}
\end{equation*}
From \eqref{gras} we deduce
$$
\lim_{p\to\infty} \int_\Omega \abs{\nabla v_p}^m \leq \abs{\Omega},
$$
then semicontinuity of $L^m $-norm gives
$$
\limsup_{p,q}\int_\Omega \frac{\abs{\nabla v_p + \nabla v_q}^m}{2^m} \leq  \abs{\Omega}= \int_\Omega \abs{\nabla d(x, \partial \Omega)}^m \leq \liminf_{p,q}\int_\Omega \frac{\abs{\nabla v_p + \nabla v_q}^m}{2^m} 
$$
and then
$$
\limsup_{p,q}\int_\Omega \frac{\abs{\nabla v_p - \nabla v_q}^m}{2^m} =0.
$$

\end{proof}

\begin{oss}
\label{oss4}
If $\text{Supp}f\subset\Omega$, then we can deduce that $v_\infty(x)=\frac{1}{\beta}+ d(x,\partial\Omega)$ for all $x\in \text{Supp}f$, while in $\Omega\setminus \text{Supp}f$ inequality \eqref{minoreq} can be strict. 
\end{oss}

  \begin{definizione}
    We denote by $\mathcal{R}$ the set of discontinuity of the function $\nabla d(x, \partial \Omega)$. This set consists of points $x \in \Omega$ for which  $d(x, \partial\Omega)$ is achieved by more than one point $y$ on the boundary. 
\end{definizione}

Then it holds true the following
  
\begin{teorema} Let $f$ be a non-negative function in $\Omega$, then function $\displaystyle{\overline{v}_\infty(x)= \frac{1}{\beta}+ d(x, \partial\Omega)}$ is the unique extremal function of \eqref{minimo} if and only if $ \mathcal{R}\subset \text{Supp}f$.
\end{teorema}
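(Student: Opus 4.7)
I begin by characterizing the minimizers of $J_\infty$ inside the admissible class $\mathcal{A}=\{\varphi \in W^{1,\infty}(\Omega) : \norma{\nabla\varphi}_{L^\infty(\Omega)} \le 1,\ \beta\norma{\varphi}_{L^\infty(\partial\Omega)} \le 1\}$. By Remark~\ref{remark4.1}, every such $\varphi$ satisfies $\varphi \le \overline{v}_\infty$ pointwise. Since $f \ge 0$, this gives $\int_\Omega f\varphi\,dx \le \int_\Omega f\overline{v}_\infty\,dx$, so $J_\infty(\varphi) \ge J_\infty(\overline{v}_\infty)$. Hence $\varphi$ is a minimizer iff $f(\overline{v}_\infty - \varphi) = 0$ a.e., and by continuity of $\overline{v}_\infty - \varphi$ this is equivalent to $\varphi \equiv \overline{v}_\infty$ on $\Supp f$. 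The theorem thus reduces to proving: \emph{$\overline{v}_\infty$ is the only admissible function agreeing with it on $\Supp f$ if and only if $\mathcal{R} \subset \Supp f$.}

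\emph{Sufficiency.} Assume $\mathcal{R} \subset \Supp f$, so $\overline{\mathcal{R}} \subset \Supp f$ by closedness. Let $\varphi \in \mathcal{A}$ with $\varphi = \overline{v}_\infty$ on $\Supp f$; then also on $\overline{\mathcal{R}}$. Fix $x \in \Omega$; if $x \in \mathcal{R}$ we are done, otherwise let $y \in \partial\Omega$ be its unique nearest boundary point and prolong the segment $[y,x]$ in the direction $\nabla d(x,\partial\Omega)$ inside $\Omega$ until it first meets the cut locus at a point $x^* \in \overline{\mathcal{R}}$; by the classical structure of distance functions in $C^2$ domains, along the whole segment $[y,x^*]$ the function $d(\cdot,\partial\Omega)$ is affine with slope~$1$. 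Combining $\varphi(y) \le 1/\beta$, $\varphi(x^*) = \overline{v}_\infty(x^*) = 1/\beta + |x^* - y|$, and the $1$-Lipschitz bound, the chain $\varphi(x^*) - \varphi(y) \le |x^* - y|$ forces $\varphi(y) = 1/\beta$, and the equality case of the Lipschitz bound forces $\varphi$ to be affine with slope~$1$ along $[y,x^*]$; in particular $\varphi(x) = 1/\beta + |x - y| = \overline{v}_\infty(x)$.

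\emph{Necessity, via contrapositive.} Suppose $x_0 \in \mathcal{R} \setminus \Supp f$; I construct a minimizer distinct from $\overline{v}_\infty$ as the minimal $1$-Lipschitz extension of $\overline{v}_\infty$ from $S := \Supp f \cup \partial\Omega$,
$$\tilde\varphi(x) = \sup_{y \in S} \bigl(\overline{v}_\infty(y) - |x - y|\bigr).$$
Standard properties give that $\tilde\varphi$ is $1$-Lipschitz, coincides with $\overline{v}_\infty$ on $S$ (in particular $\tilde\varphi = 1/\beta$ on $\partial\Omega$), and satisfies $\tilde\varphi \le \overline{v}_\infty$ in $\Omega$; hence $\tilde\varphi \in \mathcal{A}$ and it is a minimizer of $J_\infty$. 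To see $\tilde\varphi \neq \overline{v}_\infty$ it suffices to show $\tilde\varphi(x_0) < \overline{v}_\infty(x_0)$. For any $y \ne x_0$, the Lipschitz bound $\overline{v}_\infty(y) - |x_0 - y| \le \overline{v}_\infty(x_0)$ is an equality iff $\overline{v}_\infty$ has slope~$1$ along $[x_0, y]$. But since $x_0 \in \mathcal{R}$ admits at least two distinct nearest boundary points $y_1, y_2$, expanding $|x_0 + tv - y_i|$ at first order in $t$ yields, for every unit vector $v$,
$$\lim_{t \to 0^+} \frac{d(x_0 + tv, \partial\Omega) - d(x_0, \partial\Omega)}{t} = \min_i v \cdot n_i < 1, \qquad n_i := \frac{x_0 - y_i}{|x_0 - y_i|},$$
the strict inequality holding because no single $v$ can equal all the distinct $n_i$'s simultaneously. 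Thus no slope-$1$ rising chain starts at $x_0$, so the bound is strict for every $y \ne x_0$; compactness of $S$ and $x_0 \notin S$ then force $\tilde\varphi(x_0) < \overline{v}_\infty(x_0)$, completing the construction of a second extremal.

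The main technical subtlety is in the sufficiency step: justifying that the distance segment from $\partial\Omega$ through $x$ can be prolonged inside $\Omega$ until it meets $\overline{\mathcal{R}}$, rather than exiting through $\partial\Omega$ first. This is a classical structural property of the distance function in $C^2$ domains, and combined with the closedness of $\Supp f$ it ensures that the cut point $x^*$ belongs to $\Supp f$, allowing the rigidity of the $1$-Lipschitz chain to pin $\varphi$ to $\overline{v}_\infty$ along the full segment.
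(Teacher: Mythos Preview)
Your argument is correct. For the sufficiency direction you follow essentially the same route as the paper: both proofs hinge on the structural fact (the paper's Lemma~\ref{lemma_seg}) that from any $x\notin\mathcal{R}$ the ray in the direction $\nabla d(x,\partial\Omega)$ stays inside $\Omega$ and terminates at a point of $\mathcal{R}\subset\Supp f$, along which $d(\cdot,\partial\Omega)$ is affine with unit slope. You package the rigidity slightly differently---pinning $\varphi$ at the two endpoints $y\in\partial\Omega$ and $x^*\in\mathcal{R}$ and invoking the equality case of the $1$-Lipschitz bound---whereas the paper compares $w$ at $x$ and at the first hitting point of $\partial(\Supp f)$; but the underlying mechanism is the same.

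The necessity direction is where your approach genuinely diverges. The paper appeals to Aronsson's theorem: since $\overline{v}_\infty$ fails to be $C^1$ on $\Omega\setminus\Supp f$ when $\mathcal{R}\not\subset\Supp f$, it cannot be the unique optimal Lipschitz extension there, and one obtains a competitor abstractly. You instead write down an explicit McShane-type lower envelope $\tilde\varphi(x)=\sup_{y\in S}\bigl(\overline{v}_\infty(y)-|x-y|\bigr)$ over $S=\Supp f\cup\partial\Omega$, and show $\tilde\varphi(x_0)<\overline{v}_\infty(x_0)$ via the elementary observation that at a cut point the one-sided directional derivative of $d$ in every direction is bounded above by $\min_i v\cdot n_i<1$. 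This is more self-contained and avoids the black box of Aronsson's theory; the price is the small extra verification that the strict local bound propagates to a strict global one (which you handle correctly by the equality case of the Lipschitz inequality along segments, combined with compactness of $S$ and $x_0\notin S$).
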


\begin{proof}
Suppose that $\mathcal{R}\subset \text{Supp}f$ and let $w$ be a minimum of \eqref{minimo}in the class $\{\varphi\in  W^{1,\infty}(\Omega):\, \norma{\nabla\varphi}_\infty\le 1,\, \beta \norma{\varphi}_{L^\infty(\partial\Omega)}\le1\}$. By Remark \ref{remark4.1} we have
$$
w(x) \leq \frac{1}{\beta}+ d(x, \partial\Omega) \qquad \forall x \in \Omega,
$$
and arguing as in Remark \ref{oss4} we have 
$$
w(x) = \frac{1}{\beta}+ d(x, \partial\Omega) \qquad \forall x \in \overline{\text{supp}f}.
$$
Assume by contradiction that there exists $x \in (\overline{\text{Supp}f})^c$ such that
$$
w(x) < \frac{1}{\beta}+ d(x, \partial\Omega),
$$
setting $\eta=\nabla d(x,\partial\Omega)$, we choose the smallest $t$ such that $y=x+t\eta$ belongs to $\partial (\text{Supp}f)$ (Lemma \ref{lemma_seg} will justify this choice), 
thus
\begin{equation*}
    \begin{aligned}
        \norma{\nabla w}_{L^\infty(\Omega)} \abs{y-x} &\geq w(y) - w(x) \\&>d(y, \partial\Omega) - d(x, \partial\Omega)= \nabla d(\xi, \partial\Omega)\cdot (y-x) = \abs{y-x}
    \end{aligned}
\end{equation*}
where the last equality follows from Lemma \ref{lemma_seg}.

  So we have $\norma{\nabla w}_{L^\infty(\Omega)}>1$ that is a contradiction.

Assume now that $\displaystyle{w(x)= \frac{1}{\beta}+ d(x, \partial\Omega)}$ is the unique extremal of \eqref{minimo}, and assume by contradiction that $ \mathcal{R}\not\subset \Supp f$. Then, thanks to the Aronsson theorem (see \cite{Aron}), we can construct a function $\varphi$ different from $w$ which coincide with $w$ in $\Supp f$ and such that it is admissible for \eqref{limi}. Then $\varphi$ is a minimum too.\\
This contradicts the fact that $w$ is the unique minimum of \eqref{limi}, so $\mathcal{R}\subset \text{Supp}f$.

\end{proof}

We have to prove Lemma \ref{lemma_seg} to complete the proof.
\begin{lemma}
\label{lemma_seg}
Let $x \in \Omega\setminus \mathcal{R}$ and set $\eta=\nabla(d(x,\partial \Omega))$. Let us consider $y_t= x +t \eta$, then there exists $T$ such that $y_T\in \mathcal{R}$ and $y_t\notin \mathcal{R}$ for all $t<T$. Moreover, 
$$
\nabla d(x+t \eta,\partial \Omega)= \eta \qquad \forall t \in [0, T).
$$
\end{lemma}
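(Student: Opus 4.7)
The plan is to identify $y^*\in\partial\Omega$ as the unique projection of $x$ onto $\partial\Omega$ (it exists and is unique precisely because $x\notin\mathcal{R}$), so that $\eta=(x-y^*)/|x-y^*|$ and $y_t=y^*+(d(x,\partial\Omega)+t)\eta$ lies on the ray from $y^*$ through $x$ at distance $d(x,\partial\Omega)+t$ from $y^*$. I then define
$$
T:=\sup\left\{t\geq 0 \ :\ d(y_t,\partial\Omega)=d(x,\partial\Omega)+t\right\},
$$
and note that $T<\infty$ because $d(\cdot,\partial\Omega)\leq R_\Omega$ on $\Omega$ while $d(x,\partial\Omega)+t\to\infty$; since $d(y_T,\partial\Omega)=d(x,\partial\Omega)+T>0$, the point $y_T$ actually lies inside $\Omega$.

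The first routine step is to verify that the set appearing in the definition of $T$ is exactly $[0,T]$. The upper bound $d(y_s,\partial\Omega)\leq|y_s-y^*|=d(x,\partial\Omega)+s$ is automatic, and for any $s<t$ with $t$ in the set, the reverse inequality follows from the 1-Lipschitz property of the distance function: $d(y_s,\partial\Omega)\geq d(y_t,\partial\Omega)-|y_t-y_s|=d(x,\partial\Omega)+s$.

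Next I would prove that for every $t\in[0,T)$ the unique nearest boundary point of $y_t$ is $y^*$, which yields $y_t\notin\mathcal{R}$ and $\nabla d(y_t,\partial\Omega)=(y_t-y^*)/|y_t-y^*|=\eta$. If $y''\in\partial\Omega$ were a distinct nearest point of $y_t$, the chain
$$
|y_T-y''|\leq |y_T-y_t|+|y_t-y''|=d(x,\partial\Omega)+T=d(y_T,\partial\Omega)\leq|y_T-y''|
$$
collapses to equalities, forcing $y_t$ to lie on the segment $[y'',y_T]$. But $y_t$ already lies on the segment $[y^*,y_T]$ along the direction $\eta$, with $|y_t-y''|=|y_t-y^*|$; matching these two representations gives $y''=y^*$, a contradiction.

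The main obstacle is to show $y_T\in\mathcal{R}$. By maximality of $T$, one can choose $s_n\downarrow T$ such that $d(y_{s_n},\partial\Omega)<|y_{s_n}-y^*|$, and pick nearest-boundary points $p_n\in\partial\Omega$ of $y_{s_n}$ with $p_n\neq y^*$. Compactness of $\partial\Omega$ combined with continuity of the distance function extracts a subsequential limit $p\in\partial\Omega$ satisfying $|y_T-p|=d(y_T,\partial\Omega)=|y_T-y^*|$, so $p$ is also a nearest boundary point to $y_T$. If $p\neq y^*$, we are done, as $y_T$ then has two distinct nearest boundary points. The delicate subcase is $p=y^*$, i.e.\ the $p_n$ collapsing onto $y^*$ from tangentially distinct positions (a focal configuration); here the $C^2$ regularity of $\partial\Omega$ and a local expansion of the boundary at $y^*$ via the second fundamental form are needed to conclude either that such a configuration cannot occur in a bounded $C^2$ domain, or that $\nabla d$ is nonetheless discontinuous at $y_T$, so $y_T\in\mathcal{R}$ in the sense of the paper's definition. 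Dispatching this focal alternative is where the technical work of the proof is concentrated.
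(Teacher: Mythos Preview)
Your route differs from the paper's. The paper does not start from the ray $x+t\eta$; it runs the gradient flow
\[
\dot\gamma(t)=\nabla d(\gamma(t),\partial\Omega),\qquad \gamma(0)=x,
\]
on its maximal existence interval $[0,T)$ and reads off $\gamma(T)\in\mathcal R$ directly from maximality: were $\gamma(T)\notin\mathcal R$, the right-hand side would be continuous near $\gamma(T)$ and the flow could be prolonged. Only afterwards is $\gamma$ identified with the straight segment, by computing $d(\gamma(T),\partial\Omega)=d(x,\partial\Omega)+L(\gamma)$ and noting that $L(\gamma)>|\gamma(T)-x|$ would force $d(\gamma(T),\partial\Omega)>|\gamma(T)-z|$ for the footpoint $z$ of $x$, a contradiction; hence $L(\gamma)=|\gamma(T)-x|$, $\gamma$ is affine with constant velocity $\eta$, and $\nabla d(x+t\eta,\partial\Omega)=\eta$ on $[0,T)$.

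Your geometric argument is more elementary and avoids this a posteriori identification of the trajectory: you obtain $\nabla d(y_t)=\eta$ directly from the uniqueness-of-footpoint step, which is clean. The price is precisely the part you flag as incomplete. Proving $y_T\in\mathcal R$ by extracting a second nearest boundary point runs into the focal alternative, and neither of your proposed resolutions is available in general (at an endpoint of the medial axis of an ellipse, for instance, the nearest boundary point \emph{is} unique). In the paper's ODE formulation this obstacle does not arise, since membership in $\mathcal R$ is deduced from non-extendability of the flow rather than from exhibiting a second projection.
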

\begin{proof}
Consider the following Cauchy problem
\begin{equation}
    \begin{cases}
        \dot{\gamma}(t)= \nabla d(\gamma(t), \partial \Omega) ,\\
        \gamma(0)=x
    \end{cases}
\end{equation}
in a maximal interval $ [0,T)$. We have that
\begin{itemize}
\item $\displaystyle{L(\gamma) = \int_0^T \abs{\dot{\gamma}(t) }\, ds=T}$;
    \item $\displaystyle{\frac{d}{dt}d(\gamma(t),\partial\Omega)= \nabla d(\gamma(t), \partial \Omega) \dot{\gamma}(t)=1,}$
    then 
    $$
     T=\int_0^T \frac{d}{dt}d(\gamma(t),\partial\Omega) \, dt = d(\gamma(T), \partial \Omega) - d(x, \partial \Omega).
    $$
\end{itemize}
These considerations give us the following:
\begin{itemize}
    \item $T < \infty$, otherwise $d$ is unbounded, and this is a contradiction as $\Omega$ is bounded;
    \item $\gamma(T) \in \mathcal{R}$, otherwise one can extend the solution for $t>T$, in contradiction with the fact that $[0,T)$ is the maximal interval.
\end{itemize}
In the end, if $y = \gamma(T)$, we have
$$
d(y, \partial \Omega) = d(x, \partial \Omega) + T= d(x, \partial \Omega) + L(\gamma),
$$
 $L(\gamma) \geq \abs{y-x} $, and they are equal if and only if $\gamma $ is a segment.

  If $L(\gamma) > \abs{y-x} $ then $$d(y, \partial \Omega) = d(x, \partial \Omega) + L(\gamma) > d(x, \partial \Omega) + \abs{y-x} \geq \abs{y-z}$$ with $z\in \partial \Omega$ such that $d(x ,\partial\Omega) =\abs{x-z}$, and this is a contradiction, because $d(y,\partial\Omega)$ is the infimum.
Then $L(\gamma) = \abs{y-x} $ and, remembering the fact $\dot{\gamma}(t)= \nabla d(\gamma(t), \partial \Omega)$, whose norm is 1, then $\gamma$ is a segment and
$$
\nabla d(\gamma(t), \partial \Omega)= \eta \qquad \forall t \in [0,T).
$$
This conclude the proof.
\end{proof}
\subsection{The limit PDE}
We have proved that any limit $v_\infty$ of subsequence $\Set{v_p}$ is a minimum of the functional \eqref{minimo} defined in $W^{1,\infty}(\Omega)$. Now we want to understand if such limits are solutions to a certain PDE, which, in some sense, 
is the Euler-Lagrange equation of the functional \eqref{minimo}. 

\begin{prop}
Let $f\in L^\infty(\Omega)\cap C(\overline{\Omega})$ be a non-negative function. Then any limit $v_\infty$ of a subsequence $\Set{v_p}$ satisfies
\begin{equation}
\label{ali1}  
    \abs{\nabla v_\infty}\leq 1  \quad \qquad \qquad \text{in the viscosity sense}.
\end{equation}
\end{prop}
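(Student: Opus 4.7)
The plan is to derive the viscosity subsolution inequality $\abs{\nabla v_\infty} - 1 \leq 0$ directly from the $L^\infty$-gradient bound on $v_\infty$ already obtained in Proposition \ref{uinff}, rather than by passing to the limit in the viscosity formulation of the $p$-equation. Since $v_\infty \in W^{1,\infty}(\Omega)$ with $\norma{\nabla v_\infty}_{L^\infty(\Omega)} \leq 1$ and $v_\infty$ is continuous on $\overline{\Omega}$ (as uniform limit of continuous functions), I would first observe that $v_\infty$ is locally $1$-Lipschitz in $\Omega$: on any ball $B_r(x_0) \subset \Omega$, integrating $\nabla v_\infty$ along the segment joining two points of the ball yields $\abs{v_\infty(x) - v_\infty(y)} \leq \abs{x-y}$.

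With this at hand, fix $x_0 \in \Omega$ and a test function $\phi \in C^2(\Omega)$ such that $v_\infty - \phi$ has a local maximum at $x_0$ (the strictness required in the definition may be relaxed by the remark in Section \ref{section_notion}). Near $x_0$ one then has
$$
\phi(x) - \phi(x_0) \geq v_\infty(x) - v_\infty(x_0) \geq -\abs{x-x_0}.
$$
If $\nabla\phi(x_0) = 0$, the desired inequality is trivial. Otherwise, evaluating along the ray $x = x_0 - t\,\nabla\phi(x_0)/\abs{\nabla\phi(x_0)}$ for $t > 0$ small and Taylor-expanding gives $\phi(x)-\phi(x_0) = -t\abs{\nabla\phi(x_0)} + o(t)$; dividing by $t$ and letting $t \to 0^+$ produces $\abs{\nabla\phi(x_0)} \leq 1$, which is precisely the viscosity subsolution condition at $x_0$.

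There is essentially no obstacle here: the entire content is the well-known translation of an $L^\infty$ gradient bound for a continuous Sobolev function into a pointwise Lipschitz estimate on convex subsets, which automatically encodes the viscosity inequality. An alternative route, closer in spirit to the earlier proofs of the paper, would be to first show (as in Proposition \ref{teorema1.3}) that each $v_p$ is a viscosity subsolution of $-\Delta_p v = f$, pick $x_p \to x_0$ at which $v_p - \phi$ has a local maximum, and pass to the limit in $-\abs{\nabla\phi(x_p)}^{p-2}\Delta\phi(x_p) - (p-2)\abs{\nabla\phi(x_p)}^{p-4}\Delta_\infty\phi(x_p) \leq f(x_p)$ after dividing by $\abs{\nabla\phi(x_p)}^{p-2}$; however, this direct passage does not by itself force $\abs{\nabla\phi(x_0)} \leq 1$ when $\Delta_\infty\phi(x_0) \geq 0$, so the Lipschitz-based argument above seems both sharper and cleaner.
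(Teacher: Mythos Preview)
Your proof is correct and takes a genuinely different, more elementary route than the paper. The paper argues by contradiction through the approximating solutions $v_q$: assuming $\abs{\nabla\varphi(x_0)}\geq 1+\delta$, it locates interior maximum points $x_q$ of the perturbed functions $v_q-\varphi-k\abs{x-x_0}^a$, shows $\abs{\nabla v_q(x_q)}>1+\delta/4$ for large $q$, and then invokes a local $L^\infty$-gradient estimate from \cite{BDM} (pointwise control of $\abs{\nabla v_q}$ by its $L^q$-average on a ball) together with the integral bound \eqref{stima} to force $\abs{\nabla v_q(x_q)}$ close to $1$, a contradiction. You bypass this machinery entirely: since Proposition \ref{uinff} already gives $\norma{\nabla v_\infty}_{L^\infty(\Omega)}\leq 1$, the limit is $1$-Lipschitz on balls in $\Omega$, and the viscosity inequality $\abs{\nabla\phi(x_0)}\leq 1$ follows in two lines from the one-sided comparison $\phi(x)-\phi(x_0)\geq -\abs{x-x_0}$ and a Taylor expansion along a well-chosen ray. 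What you give up is the direct connection to the viscosity structure of the $p$-equations (and your own remark correctly identifies why the naive viscosity-limit argument does not close); what you gain is a proof that makes transparent that \eqref{ali1} is nothing more than a restatement of the Lipschitz bound already in hand.
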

\begin{proof}
The proof follows the techniques contained in \cite{BDM, JLM,YY,LW}.

  If $x_0\in \Omega$, let $\varphi\in C^2(\Omega)$ be such that $v_\infty-\varphi$ has a local maximum at $x_0$
$$(v_\infty-\varphi)(x_0)\ge(v_\infty-\varphi)(x), \, \, \forall x\in B_R(x_0),$$
then
\begin{equation}
    \label{visc1}
    \abs{\nabla \varphi(x_0)}\le 1. 
\end{equation}
Indeed, let $$C=\sup_{q> 1} \max{\Set{\norma{v_q}_{L^\infty\left( B_R(x_0)\right)}; \norma{\varphi}_{L^\infty\left( B_R(x_0)\right)}}},$$
we consider the following sequence
$$ f_q(x)= v_q(x)- \varphi(x)-k\abs{x-x_0}^a, \quad k=\frac{4C}{R^a}, \quad a>2.$$
So $f_q(x)\le -3C$ for $x\in\partial B_R(x_0)$, and $f_q(x_0)\ge -2C$. Then $f_q(x)$ attains its maximum at some point $x_q$ in the interior of $B_R(x_0)$.
and it holds
$$\nabla v_q(x_q)=\nabla \varphi(x_q)+ k a (x_q-x_0)\abs{x_q-x_0}^{a-2}.$$
Moreover, the sequence $\Set{x_q}$ must converge to $x_0$. 

 Assume by contradiction that $\abs{\nabla \varphi(x_0)}>1$, so there exists $\delta\in(0,1)$ such that $\abs{\nabla \varphi(x_0)}\ge 1+\delta$. Choosing $\overline{q}$ large enough, we have

$$\abs{\nabla v_q(x_q)}>1+\frac{\delta}{2} - ka \abs{x_q-x_0}^{a-1}\ge 1+\frac{\delta}{4}, \quad \forall q\ge \overline{q}.$$

  By Lemma 1.1 of Part III in \cite{BDM}, 
\begin{equation}
    \abs{\nabla v_q (x)} \leq \left(\frac{\gamma}{R^n}\right)^{\frac{1}{q}}\left(\int_{B_{\frac{R}{2}(x_0)}}\left(1+ \abs{\nabla v_q}\right)^q\, dx\right)^{\frac{1}{q}}, \qquad \forall x \in B_\frac{R}{2}(x_0)
\end{equation}
where $\gamma $ is a constant independent of $q$.


  For $q$ sufficiently large, this contradict $\abs{\nabla v_q(x_q)}>1+\frac{\delta}{4}$. 
\end{proof}
\begin{prop}
\label{Teorema5.3}Let $f\in L^\infty(\Omega)\cap C(\Omega)$ be a non-negative function, then a continuous weak solution to \eqref{p_originale} is a viscosity solution.
\end{prop}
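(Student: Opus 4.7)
The plan is to follow almost verbatim the argument used for Proposition \ref{teorema1.3}, replacing the zero-order term $\Lambda_p |u|^{p-2}u$ by the source $f$. The proof proceeds by contradiction and relies on the monotonicity of the field $\xi \mapsto |\xi|^{p-2}\xi$, via the standard inequality
$$\left\langle |\xi|^{p-2}\xi - |\eta|^{p-2}\eta,\, \xi-\eta \right\rangle \geq C(n,p)\, |\xi-\eta|^p.$$
I will write out the supersolution property and remark that the subsolution property is analogous (swap the roles of $+$ and $-$ parts, reverse the strict inequalities).

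\textbf{Interior case.} Fix $x_0 \in \Omega$ and $\phi \in C^2(\Omega)$ with $\phi(x_0)=u(x_0)$ and $x_0$ a strict minimum of $u-\phi$. Suppose for contradiction that
$$-|\nabla \phi(x_0)|^{p-2}\Delta\phi(x_0) - (p-2)|\nabla\phi(x_0)|^{p-4}\Delta_\infty \phi(x_0) < f(x_0).$$
By continuity of $\phi,\nabla\phi,D^2\phi$ and $f$, the same strict inequality persists on a ball $B_r(x_0)\Subset\Omega$; by the strict minimum property, there is $m>0$ with $u \geq \phi + m$ on $\partial B_r(x_0)$. Put $\psi := \phi + m/2$ and note that $U:=\{\psi>u\}\cap B_r(x_0)$ is a non-empty open set compactly contained in $B_r(x_0)$, so $(\psi-u)^+$, extended by zero outside $U$, is an admissible non-negative test function in $W^{1,p}(\Omega)$. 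Plugging it into \eqref{weak-f} for $u$ and testing the pointwise strict PDE for $\psi$ against $(\psi-u)^+$ (the boundary integrals on $\partial U\cap B_r(x_0)$ vanish because $\psi=u$ there), subtracting yields
$$C(n,p)\int_U |\nabla\psi-\nabla u|^p\, dx \leq \int_U \left\langle |\nabla\psi|^{p-2}\nabla\psi - |\nabla u|^{p-2}\nabla u,\, \nabla(\psi-u)\right\rangle dx < 0,$$
a contradiction.

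\textbf{Boundary case.} For $x_0 \in \partial \Omega$, assume by contradiction that both the interior expression above and the boundary quantity $|\nabla\phi(x_0)|^{p-2}\partial_\nu \phi(x_0)+\beta^p|\phi(x_0)|^{p-2}\phi(x_0)$ are strictly negative. Again both inequalities persist in $\overline{\Omega}\cap B_r(x_0)$; setting $\psi=\phi+m/2$, the set $U=\{\psi>u\}\cap B_r(x_0)$ is now compactly contained in $\overline{\Omega}\cap B_r(x_0)$, but its boundary meets $\partial\Omega$. Testing the weak formulation for $u$ with $(\psi-u)^+$ and the strict PDE for $\psi$ against the same function (the latter via integration by parts on $U$, producing a boundary integral on $\partial\Omega\cap U$ involving $|\nabla\psi|^{p-2}\partial_\nu\psi$), subtracting gives the chain displayed at the end of the proof of Proposition \ref{teorema1.3}: the monotonicity inequality bounds the difference from below by $C(n,p)\int_U|\nabla\psi-\nabla u|^p$ while the right-hand side is strictly negative by the contradiction assumption. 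Contradiction.

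\textbf{Subsolution.} The same argument with $\psi := \phi - m/2$ and test function $(u-\psi)^+$ and all strict inequalities reversed handles the viscosity subsolution property.

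\textbf{Main obstacle.} No genuinely new idea is required: the only care needed is in the boundary case, namely verifying that $(\psi-u)^+$ is admissible in $W^{1,p}(\Omega)$ and correctly tracking the boundary term generated by integration by parts over $\overline{\Omega}\cap B_r(x_0)$, so that it combines with the Robin condition to produce the desired contradiction. Both points are exactly as treated in the proof of Proposition \ref{teorema1.3}, with $f$ in place of $\Lambda_p|u|^{p-2}u$.
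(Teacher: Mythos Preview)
Your proposal is correct and follows exactly the approach the paper intends: the paper's own proof of this proposition consists of the single sentence ``The proof follows the same techniques of Proposition~\ref{teorema1.3}'', and you have carried out precisely that adaptation, replacing the zero-order term $\Lambda_p|u|^{p-2}u$ by the datum $f$ and otherwise reproducing the contradiction argument based on the monotonicity inequality for the $p$-Laplace field.
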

\begin{proof}
The proof follows the same techniques of Proposition \ref{teorema1.3}.
\end{proof}
\begin{teorema}
\label{Teorema5.2}
Let $f\in L^\infty(\Omega)\cap C(\Omega)$  be a non-negative function. Then any $v_\infty$ satisfies
\begin{align}
\label{ali}
    &\abs{\nabla v_\infty}=1  \quad \qquad \qquad \,\text{on} \, \Set{f>0}&& \text{in the viscosity sense}\\
    \label{ali2}
    &-\Delta_\infty v_\infty=0 \quad \qquad \qquad \text{on}\, (\overline{\Set{f>0}})^c && \text{in the viscosity sense}
\end{align}
\end{teorema}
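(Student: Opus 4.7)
The plan is to exploit that each $v_p$ is a viscosity solution (by Proposition \ref{Teorema5.3}) of the $p$-Poisson equation written in non-divergence form
\[
-|\nabla v_p|^{p-2}\Delta v_p - (p-2)|\nabla v_p|^{p-4}\Delta_\infty v_p = f,
\]
combined with the uniform convergence $v_{p_j}\to v_\infty$, to pass to viscosity limits via the standard stability/perturbation trick used in \cite{BDM,JLM}. Specifically, given $\phi\in C^2$ and $x_0$ a strict local extremum of $v_\infty - \phi$, one extracts $x_p\to x_0$ where $v_p - \phi$ attains a local extremum of the same type, and plugs $\phi_p = \phi + c_p$ (with $c_p = v_p(x_p)-\phi(x_p)\to 0$) into the viscosity inequality for $v_p$.

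For \eqref{ali}, the upper bound $|\nabla v_\infty|\le 1$ is already \eqref{ali1}, so only the supersolution side needs work. Let $x_0\in\{f>0\}$ and let $v_\infty-\phi$ have a strict minimum at $x_0$; then at $x_p$ one has
\[
-|\nabla\phi(x_p)|^{p-2}\Delta\phi(x_p) - (p-2)|\nabla\phi(x_p)|^{p-4}\Delta_\infty\phi(x_p) \ge f(x_p).
\]
Assume by contradiction that $|\nabla\phi(x_0)|<1$. Then for large $p$ one has $|\nabla\phi(x_p)|\le 1-\delta$ for some $\delta>0$, and since both $(1-\delta)^{p-2}$ and $(p-2)(1-\delta)^{p-4}$ tend to $0$, the left hand side is bounded in absolute value by a quantity vanishing as $p\to\infty$, while the right hand side converges to $f(x_0)>0$ by continuity of $f$. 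This contradiction yields $|\nabla\phi(x_0)|\ge 1$, giving the supersolution property.

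For \eqref{ali2}, fix $x_0\in(\overline{\{f>0\}})^c$. Since this set is open and $f\ge 0$, $f\equiv 0$ in a neighbourhood of $x_0$, hence $f(x_p)=0$ for $p$ large. For the supersolution part, take $v_\infty-\phi$ with strict minimum at $x_0$; we may assume $|\nabla\phi(x_0)|>0$ (otherwise $\Delta_\infty\phi(x_0)=0$ trivially). Then for large $p$, $|\nabla\phi(x_p)|$ is bounded away from $0$, so we may divide by $(p-2)|\nabla\phi(x_p)|^{p-4}>0$ to obtain
\[
-\frac{|\nabla\phi(x_p)|^{2}\,\Delta\phi(x_p)}{p-2} - \Delta_\infty\phi(x_p) \ge 0,
\]
and letting $p\to\infty$ gives $-\Delta_\infty\phi(x_0)\ge 0$. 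The subsolution direction is symmetric: using a strict maximum, the inequality is reversed and $f(x_p)=0$ again eliminates the right hand side, so the same division and limit yield $-\Delta_\infty\phi(x_0)\le 0$.

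The main subtlety, as customary with the $\infty$-Laplacian, is the case $\nabla\phi(x_0)=0$, where the non-divergence form of $-\Delta_p$ and of $-\Delta_\infty$ must be interpreted through the upper and lower semicontinuous envelopes (equivalently, through the semijet definition of viscosity solutions). For \eqref{ali} this is a non-issue, as the contradiction argument only uses $|\nabla\phi(x_p)|<1-\delta$, which is trivially compatible with $\nabla\phi(x_0)=0$. For \eqref{ali2} one invokes the standard viscosity convention that when the test function has vanishing gradient, the condition on $-\Delta_\infty$ reduces to a condition on the extremal eigenvalues of $D^2\phi(x_0)$, and the passage to the limit still goes through; this is the only place requiring care beyond routine stability.
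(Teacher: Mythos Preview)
Your proof is correct and follows essentially the same stability argument as the paper: use Proposition~\ref{Teorema5.3} to get viscosity inequalities for $v_p$ at points $x_p\to x_0$, then pass to the limit, arguing by contradiction for \eqref{ali} and by dividing through $(p-2)|\nabla\phi(x_p)|^{p-4}$ for \eqref{ali2}. The only cosmetic difference is that for \eqref{ali} the paper divides first and lets the right-hand side blow up, whereas you bound the left-hand side directly; and for the case $\nabla\phi(x_0)=0$ in \eqref{ali2} the paper simply notes that then $\Delta_\infty\phi(x_0)=0$ by definition, so no appeal to semicontinuous envelopes or extremal eigenvalues is needed.
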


\begin{proof}
We first prove \eqref{ali}. Let $x_0\in \Omega \cap \Set{f >0}$  and let $\varphi\in C^2(\Omega)$ such that $v_\infty-\varphi$ has a strict minimum in $x_0$. We want to show
$$
\abs{\nabla \varphi(x_0)}\geq 1 
$$ 
Let us denote by $x_p$ the minimum of $v_p- \varphi$, we have that  $x_p \to x_0$, then $x_p \in B_R(x_0) \subset \Set{f >0}$  for $p $ large enough. Setting $\varphi_p(x) =\varphi(x) + c_p$, then $c_p= v_p(x_p) - \varphi(x_p)\to 0$ when $p$ goes to infinity.  We notice that $v_p(x_p)= \varphi_p(x_p) $ and $ v_p- \varphi_p$ has a minimum in $x_p$, so by Proposition \ref{Teorema5.3},
$$
 -\abs{\nabla \varphi_p(x_p)}^{p-2} \Delta \varphi_p(x_p)- (p-2) \abs{\nabla \varphi_p(x_p)}^{p-4}\Delta_\infty \varphi (x_p) \geq f(x_p) >0.
$$
Dividing by $(p-2) \abs{\nabla \varphi_p(x_p)}^{p-4}$, we obtain
\begin{equation}
    \label{sus}
    - \Delta_\infty \varphi_p(x_p) - \frac{\abs{\nabla \varphi_p(x_p)}^{2 } \Delta \varphi_p(x_p)}{ p-2} \geq \frac{ f(x_p)}{(p-2)\abs{\nabla \varphi_p(x_p)}^{p-4}}
\end{equation}

  This gives us
$
\abs{\nabla \varphi(x_0)}\geq 1 $, otherwise the right-hand side would go to infinity, in contradiction with the fact that $\varphi \in C^2(\Omega)$.

 We stress that, if we let $p\to \infty $ in \eqref{sus}, we get $-\Delta_\infty \varphi(x_0)\ge 0$, so $v_\infty$ is a supersolution to $-\Delta_\infty u=0$.

 Now, we only have to prove that $v_\infty$ is a viscosity solution to $\Delta_\infty \varphi=0$ in $(\overline{\Set{f>0}})^c$.

 If we fix $x_0\in(\overline{\Set{f>0}})^c$, and $\varphi \in C^2(\Omega)$ such that $v_\infty - \varphi$ has a strict maximum in $x_0$, we can choose $R$ such that $B_R(x_0)\subset(\overline{\Set{f>0}})^c$. The function $v_p - \varphi $ has a maximum $x_p \to x_0$, and $x_p \in B_R(x_0)$ for $p$ large enough. 
 
 The definition of viscosity subsolution implies
$$
 -\abs{\nabla \varphi_p(x_p)}^{p-2} \Delta \varphi_p(x_p)- (p-2) \abs{\nabla \varphi_p(x_p)}^{p-4}\Delta_\infty \varphi (x_p) \leq f(x_p)=0.
$$

 Without loss of generality, we may assume $\abs{\nabla \varphi(x_0)}\neq 0$. Dividing both side of the last equation by $(p-2) \abs{\nabla \varphi_p(x_p)}^{p-4}$, we obtain
$$
-\Delta_\infty \varphi_p(x_p) \leq \frac{\abs{\nabla \varphi_p(x_p)}^{2} \Delta \varphi_p(x_p)}{p-2}.
$$

 Letting $p\to \infty$, we get
 
 $$-\Delta_\infty \varphi(x_0)\le 0.$$

 Analogously if $\varphi \in C^2(\Omega)$ is such that $v_\infty - \varphi$ has a minimum at $x_0$, a symmetric argument shows that $-\Delta_\infty \varphi (x_0)\geq 0$.
\end{proof}

\subsection*{Some examples}

\begin{esempio}
\label{ex1}
We consider the case $\Omega= B_1(0)$ and $f=1$.
The uniqueness of the solution and the invariance under rotation of the $p$-Laplacian ensure that the solution $v$ is radially symmetric
and 
$$
r^{n-1} \Delta_p v_p= \frac{d}{dr} \left( r^{n-1} \left\lvert v_p'\right\rvert^{p-2} v_p' \right).
$$
Setting $\alpha=1/(p-1)$, we have
$$
v_p(x)= - \frac{p-1}{ n^\alpha p} \abs{x}^{\frac{p}{p-1}} + \frac{1}{(n\beta^p)^\alpha} + \frac{p-1}{n^\alpha p}
$$
Then 
$$
v_\infty (x) = -\abs{x} +\frac{1}{\beta} +1= \frac{1}{\beta}+ d(x, \partial \Omega)
$$
\end{esempio}
\begin{esempio}
\label{ex2}
We fix $0<\varepsilon<1$ and we consider
$$
\left.
f= 
\right.
\begin{cases}
1 &\text{ if } x \in B_\varepsilon(0) \\
0 &\text{ if } x \in B_1(0)\setminus B_\varepsilon(0).
\end{cases}
$$

  In this case, $v_p$ is radially symmetric, and

$$
\left.
v_p= 
\right.
\begin{cases}
\frac{p-1}{n^\alpha p} \left( \varepsilon^{\frac{p}{p-1}} -\abs{x}^{\frac{p}{p-1}} \right) + \frac{\varepsilon^{n\alpha}(p-1)}{n^\alpha (p-n)} \left( 1- \varepsilon^{\frac{p-n}{p-1}} \right)+ \frac{\varepsilon^{n\alpha}}{(n\beta^p)^\alpha } &\text{ if } x \in B_\varepsilon(0) \\
\frac{\varepsilon^{n\alpha}(p-1)}{n^\alpha (p-n)} \left( 1-\abs{x}^{\frac{p-n}{p-1}}\right) + \frac{\varepsilon^{n\alpha}}{(n\beta^p)^\alpha } &\text{ if } x \in B_1(0)\setminus B_\varepsilon(0)
\end{cases}
$$

  Letting $p$ go to infinity, we obtain
$$
v_\infty= \frac{1}{\beta}+ d(x,\partial \Omega).
$$
\end{esempio}

\addcontentsline{toc}{chapter}{
Bibliografia}

\addcontentsline{toc}{chapter}{ Bibliografia}
\bibliographystyle{alpha}
\bibliography{biblio}
\nocite{Barl,GMP}

\Addresses

\end{document}